\let\pa\partial
\let\na\nabla
\let\eps\varepsilon
\newcommand{\R}{{\mathbb R}}
\newcommand{\diver}{\operatorname{div}}
\newtheorem{theorem}{Theorem}
\newtheorem{lemma}[theorem]{Lemma}
\newtheorem{proposition}[theorem]{Proposition}
\newtheorem{remark}[theorem]{Remark}
\begin{document}

\title[Cross-diffusion limits in multispecies kinetic models]{
Cross-diffusion limits in multispecies kinetic models}

\author[A. J\"ungel]{Ansgar J\"ungel}
\address{Institute of Analysis and Scientific Computing, TU Wien, Wiedner Hauptstra\ss e 8--10, 1040 Wien, Austria}
\email{juengel@tuwien.ac.at} 

\author[A. Pollino]{Annamaria Pollino}
\address{Institute of Analysis and Scientific Computing, TU Wien, Wiedner Hauptstra\ss e 8--10, 1040 Wien, Austria}
\email{annamaria.pollino@tuwien.ac.at}

\author[S. Taguchi]{Satoshi Taguchi}
\address{Department of Advanced Mathematical Sciences, Graduate School of Informatics, Kyoto University, Kyoto 606-8501, Japan}
\email{taguchi.satoshi.5a@kyoto-u.ac.jp} 

\date{\today}

\thanks{The first author acknowledges partial support from the Austrian Science Fund (FWF), grant 10.55776/F65, and from the Austrian Federal Ministry for Women, Science and Research and implemented by \"OAD, project MultHeFlo. This work has received funding from the European 
Research Council (ERC) under the European Union's Horizon 2020 research and innovation programme, ERC Advanced Grant NEUROMORPH, no.~101018153. For open-access purposes, the author has applied a CC BY public copyright license to any author-accepted manuscript version arising from this submission.}

\begin{abstract}
Cross-diffusion systems are formally derived from multispecies kinetic models in the diffusion limit. The first limit in the multispecies BGK model of Gross and Krook leads to a variant of the non-isothermal Maxwell--Stefan equations. The second limit in a BGK model with Brinkman-type force term yields generalized Busenberg--Travis equations, which reduce for constant temperature to the classical Busenberg--Travis system for segregating population species. Entropy equalities are derived for the kinetic and cross-diffusion equations.
\end{abstract}

\keywords{Kinetic theory, Chapman--Enskog expansion, BGK model, Maxwell--Stefan equations, Busenberg--Travis equations, sntropy inequality.}  
 
\subjclass[2000]{35Q20; 35K40, 35K59, 76P05, 82B40.}

\maketitle


\section{Introduction}

Cross-diffusion equations appear in many applications from physics, biology, chemistry, and other fields. They can be derived from random walks on a lattice \cite{ZaJu17}, interacting particle systems \cite{CDJ19}, or fluid approximations \cite{CCDJ24}. In this paper, we study the derivation from kinetic equations. For instance, the diffusion limit in the multispecies Boltzmann model leads to the Maxwell--Stefan equations for gas mixtures \cite{BGS15,BrGr23}. We explore to what extend other cross-diffusion systems can be derived from multispecies BGK (Bhatnagar--Gross--Krook) models \cite{BGK54}, and we determine the entropy equalities for the limiting cross-diffusion models. We investigate two models.

\subsection{First model}

We consider the multispecies BGK model of Gross and Krook \cite{GrKr56}. Let $f_i^\eps(x,\xi,t)$ be the distribution function of the $i$th component of a gas mixture, depending on the position $x\in\R^3$, velocity $\xi\in\R^3$, and time $t\ge 0$ and solving the scaled equations
\begin{align}\label{1.mbgk}
  \eps\pa_t f_i^\eps + \xi\cdot\na_x f_i^\eps = \frac{1}{\eps}
  \sum_{j=1}^N\nu_{ij}(M_{ij}^\eps-f_i^\eps),\quad i=1,\ldots,N,
\end{align}
for $(x,\xi,t)\in\R^3\times\R^3\times(0,\infty)$, where $\eps>0$ relates to the Mach and Knudsen number, $\nu_{ij}>0$ is the collision frequency, and $M_{ij}^\eps$ denotes the equilibrium distribution (Maxwellian) associated to the $i$th and $j$th component. We refer to Appendix \ref{sec.scale} for details on the scaling. The model satisfies the conservation laws of mass, momentum, and energy, the $H$-theorem, and the uniqueness of the equilibrium solution \cite{HHM17}; see Section \ref{sec.bgkmodel} for details. We show that the Chapman--Enskog expansion $f_i^\eps=M_i^0+\eps g_i^\eps$, where $M_i^0=\lim_{\eps\to 0}M_{ii}^\eps$ and $g_i^\eps$ is a correction, leads in the formal limit $\eps\to 0$ to the non-isothermal Maxwell--Stefan equations (see Theorem \ref{thm.bgk})
\begin{align}\label{1.MS1}
  & \pa_t n_i + \diver_x(n_iv_i) = 0, \quad
  \na_x(n_i\theta) = -\sum_{j=1}^N D_{ij}n_in_j(v_i-v_j), \\
  & \frac32\sum_{i=1}^N \pa_t(n_i\theta) 
  + \frac{5}{2}\sum_{i=1}^N\diver_x
  \bigg\{\frac{n_i\theta\bar{v}_i}{\nu_i}
  - \na_x\bigg(\frac{n_i\theta^2}{m_i\nu_i}\bigg)\bigg\} = 0, \quad\mbox{in }\R^3,\ t>0, \label{1.MS2}
\end{align}
for $i=1,\ldots,N$, where $n_i$ is the (limiting) number density, $m_i$ the particle mass of the $i$th species, $v_i$ the velocity of the $i$th species, and $\theta$ the gas temperature. The Maxwell--Stefan coefficients $D_{ij}$ depend on $\nu_{ij}$ and are symmetric, $\nu_i=\sum_{i=1}^N\nu_{ij}$, and $\bar{v}_i$ is a mean value of $(v_1,\ldots,v_N)$. For constant temperature, equations \eqref{1.MS1} were suggested in 1866 by James Max\-well \cite{Max66} for dilute gases and in 1871 by Joseph Stefan \cite{Ste71} for fluids. In contrast to Fick's law, which predicts a linear dependence between the gradient $\na_x n_i$ and the flux $n_iv_i$, the flux in the Maxwell--Stefan approach also depends on the other gradients $\na_x n_j$ for $j\neq i$.

The isothermal Maxwell--Stefan equations were derived from the Boltzmann system by Boudin, Grec, and Salvarani \cite{BGS15} by assuming that the distribution function $f_i^\eps$ is a shifted Maxwellian. Anwasia and Simi\'c \cite{AnSi22} justified this ansatz by showing that the shifted Maxwellian is the formal minimizer of the kinetic entropy subject to the moment constraints. Briant and Grec \cite{BrGr23} proved rigorously the asymptotic limit $\eps\to 0$ in the linearized Boltzmann system, leading to the Maxwell--Stefan equations in the Fick--Onsager formulation, where the flux is written as a linear combination of the density gradients. The existence of global weak solutions to the Maxwell--Stefan system was proved in \cite{JuSt13}. The Maxwell--Stefan equations coupled to a heat equation for the mixture temperature were derived and analyzed by Hutridurga and Salvarani \cite{HuSa17,HuSa18}. Another non-isothermal Maxwell--Stefan model was determined by Grec and Simi\'c \cite{GrSi23}, using a pressure tensor instead of multispecies temperatures. 

Our model \eqref{1.MS1}--\eqref{1.MS2} slightly differs from other non-isothermal Maxwell--Stefan equations. The works \cite{AnSi22,HuSa17} include the convective term $\diver_x(n_i\theta v_i)$ in the energy equation but no diffusion term. Moreover, the convective velocity in our model is given by the average $\bar{v}_i$ instead of $v_i$. While the diffusion term in \eqref{1.MS2} differs from conventional expressions, like in \cite{GeJu24}, the model in \cite{HeJu21} leads to the right-hand side of \eqref{1.MS2} under a specific choice of the heat conductivity. 

\subsection{Second model}

We perform a Chapman--Enskog expansion for the simple BGK model
\begin{align*}
  \frac{\eps}{\sigma}\pa_t f_i^\eps 
  + \xi\cdot\na_x f_i^\eps
  + \frac{1}{\sigma}\na_x\phi_i^\eps\cdot\na_\xi f_i^\eps
  = \frac{\nu_i}{\eps}(M_i(f_i^\eps)-f_i^\eps),
  \quad i=1,\ldots,N,
\end{align*}
where $\phi_i^\eps$ solves 
\begin{align}\label{1.brink}
  -\eps\Delta_x\phi_i^\eps + \phi_i^\eps 
  = -\sum_{j=1}^N a_{ij}\rho_j^\eps\theta_j^\eps
  \quad\mbox{in }\R^3,\ i=1,\ldots,N.
\end{align}
Here, $M_i(f_i)$ is a Maxwellian having the same moments as $f_i^\eps$ (up to second order) and $\nu_i$ is a collision frequency (not to be confused with $\nu_i$ from the first model). Furthermore,
\begin{align*}
  \rho_i^\eps = m_in_i^\eps = m_i\int_{\R^3}f_i^\eps d\xi, \quad
  \theta_i^\eps = \frac{m_i}{3n_i^\eps}\int_{\R^3}f_i^\eps|\xi|^2 d\xi
\end{align*}
are the mass density and temperature of the $i$th species, respectively, and $a_{ij}\in\R$ are interaction coefficients. Here, we interpret $f_i^\eps$ as the distribution function of active agents or animals of different species with masses $m_1,\ldots,m_n$. The right-hand side of \eqref{1.brink} represents (up to the sign) the mixture pressure for an ideal gas. Accordingly, \eqref{1.brink} can be interpreted as the Brinkman equation that is an extension of the Darcy law \cite{Bri49}. The unconventional point here is that the Brinkman potential $\phi_i^\eps$ depends on the distribution function in a nonlocal way. 

The diffusion limit leads to the equations (see Theorem \ref{thm.BT})
\begin{align*}
  & \pa_t\rho_i + \diver J_i = 0, \quad
  J_i = -\frac{1}{\nu_i}\bigg\{\sigma
  \na_x\bigg(\frac{\rho_i\theta_i}{m_i}\bigg) 
  - \rho_i\na_x\phi_i\bigg\}, \quad
  \phi_i = -\sum_{j=1}^N a_{ij}\rho_j\theta_j, \\
  & \frac32\pa_t\bigg(\frac{\rho_i\theta_i}{m_i}\bigg) 
  = \frac52\diver_x\bigg\{\frac{\sigma}{\nu_i}\na_x
  \bigg(\frac{\rho_i\theta_i^2}{m_i^2}\bigg)
  - \frac{\rho_i\theta_i}{\nu_i m_i}\na_x\phi_i\bigg\}
  + \frac{1}{\sigma}J_i\cdot\na_x\phi_i, \quad i=1,\ldots,N.
\end{align*}
The last term in the energy equation is the Joule heating with the flux $J_i$. We interpret this model as a non-isothermal generalization of the 
Busenberg--Travis model. In the isothermal case $\theta_i=1$, we recover the regularized Busenberg--Travis equations
\begin{align}\label{1.BT}
  \pa_t\rho_i = \diver_x\bigg(\sigma\frac{\na_x\rho_i}{\nu_im_i}
  + \frac{\rho_i}{\nu_i}\sum_{j=1}^Na_{ij}\na_x\rho_j\bigg).
\end{align}
The classical Busenberg--Travis model is obtained after setting $\sigma=0$ and $a_{ij}=1$ \cite{BuTr83}. It describes the segregation of population species and can be derived from a mean-field-type limit from interacting particle systems \cite{CDJ19}. 

We can derive equations \eqref{1.BT} without the $\sigma$-diffusion term by changing the scaling: Setting $\sigma=\sqrt{\eps}$ and performing the modified Chapman--Enskog expansion $f_i^\eps=f_i^0 + \sqrt{\eps}g_i^\eps$ leads directly to \eqref{1.BT} with $\sigma=0$; see Section \ref{sec.hf}. We show in Appendix \ref{sec.scale} that this corresponds to a high-field scaling.

\subsection{State of the art}

Let us mention other cross-diffusion derivations from kinetic models in the literature. A cross-diffusion--fluid system was obtained from a kinetic--fluid model with turning and macroscopic interaction operators \cite{BKZ18}. Without fluid coupling, the cross-diffusion system resembles the Shigesada--Kawasaki--Teramoto system analyzed in \cite{ChJu04}. In a similar spirit as in \cite{BKZ18}, chemotaxis Keller--Segel-fluid equations were derived in \cite{BBTW15}. A kinetic model with nonlocal Fokker--Planck collision operator was studied in \cite{CFI25}; the limit of dominant collisions (without rescaling of time) leads to nonlocal Busenberg--Travis equations. A related quasilinear parabolic system for gas mixtures was derived in \cite{MaTr24} from kinetic equations with dominant elastic collisions. 

BGK operators are suggested as simple approximations of the highly complex Boltzmann collision operator. They should satisfy the same physical properties of the Boltzmann collision operator, like conservation of mass, momentum, and energy; the $H$-theorem; the indifferentiability principle (when all particle properties are identical, the multispecies description reduces to the single-species description); positivity of the temperature; and the Onsager reciprocal relations. The development of multispecies BGK models is delicate, and various consistent choices are possible. Up to our knowledge, none of the existing BGK-type models comply with all of the mentioned requirements. 

The first consistent BGK model was suggested in \cite{AAP02}. It satisfies the conservation laws, the $H$-theorem, and the uniqueness of the equilibrium solution, but the strong conditions $v_{ij}=v_i$ and $\theta_{ij}=\theta_i$ may be subject to criticism. A general model for two species was proposed in \cite{KPP17}, fulfilling the conservation laws and the $H$-theorem, later improved in \cite{BBGSP18} and extended to reactive mixtures in \cite{MST25}. Haack et al.\ \cite{HHM17} suggest the multispecies BGK model \eqref{1.mbgk}, satisfying the same properties as before. Here, the symmetry conditions $v_{ij}=v_{ji}$ and $\theta_{ij}=\theta_{ji}$ are supposed. Benilov \cite{Ben24} discusses the validity of the Onsager reciprocal relations. The Onsager relations hold in the model of \cite{GSB89} but not in the multispecies version of \cite{HHM17}. For more details, we refer to \cite{KPP17,Pup19}. 

\medskip
The paper is organized as follows. We define the multispecies BGK model of Gross and Krook in Section \ref{sec.mbgk} and derive formally the non-isothermal Maxwell--Stefan equations. The diffusion limit in the multispecies BGK model with Brinkman force is computed in Section \ref{sec.bgkb}. For both approaches, we compute the entropy equalities. In the appendices, we recall the computation of the multispecies velocities $v_{ij}$ and temperatures $\theta_{ij}$ and detail the physical background of our scaling. 


\section{Diffusion limit in a multispecies BGK model}\label{sec.mbgk}

The aim of this section is the formal derivation of non-isothermal Maxwell--Stefan equations from the multispecies BGK model of Gross and Krook \cite{GrKr56} in the diffusion limit, using a Chapman--Enskog expansion. The BGK model replaces the Boltzmann collision operator with a simpler nonlinear operator, still conserving mass, momentum, and energy and satisfying an $H$-theorem \cite[Sec.~IV.1]{Cer69}. Before presenting the Chapman--Enskog procedure, we introduce the multispecies BGK collision operator and the nonlinear BGK model. 

\subsection{Multispecies BGK collision operator}

Consider a gas mixture consisting of $N$ different particle species. Each particle of the $i$th species has the mass $m_i$, $i=1,\ldots,N$. The collision of two particles is assumed to be elastic and to conserve the mass of each particle. Let $\xi$ and $\xi_*$ be the pre-collisional velocities of species $i$ and $j$ and $\xi'$ and $\xi'_*$ be the corresponding post-collisional velocities. The collisions conserve the momentum and kinetic energy,
\begin{align}\label{bgk.me}
  m_i\xi' + m_j\xi'_* = m_i\xi + m_j\xi_*, \quad
  \frac{m_i}{2}|\xi'|^2 + \frac{m_j}{2}|\xi'_*|^2
  = \frac{m_i}{2}|\xi|^2 + \frac{m_j}{2}|\xi_*|^2.
\end{align}
These relations yield for $i,j=1,\ldots,N$,
\begin{align*}
  0 &= (m_i+m_j)\big(m_i|\xi|^2 + m_j|\xi_*|^2 - m_i|\xi'|^2 
  - m_j|\xi'_*|^2\big) \\
  &\phantom{xx}- \big(|m_i\xi+m_j\xi_*|^2
  - |m_i\xi'+m_j\xi'_*|^2\big) \\
  &= m_im_j\big(|\xi-\xi_*|^2 - |\xi'-\xi'_*|^2\big)
\end{align*}
and consequently $|\xi-\xi_*|=|\xi'-\xi'_*|$. Thus, there exists $\sigma\in\mathbb{S}^2$, where $\mathbb{S}^2$ is the two-dimensional unit sphere, such that $\xi'-\xi'_*=|\xi-\xi_*|\sigma$, and the conservation of momentum implies the relations
\begin{align*}
  \xi' &= \frac{m_i\xi'+m_j\xi'_*}{m_i+m_j}
  + \frac{m_j}{m_i+m_j}(\xi'-\xi'_*)
  = \frac{1}{m_i+m_j}\big(m_i\xi+m_j\xi_*+m_j|\xi-\xi_*|\sigma\big), \\
  \xi'_* &= \frac{m_i\xi'+m_j\xi'_*}{m_i+m_j}
  - \frac{m_i}{m_i+m_j}(\xi'-\xi'_*)
  = \frac{1}{m_i+m_j}\big(m_i\xi+m_j\xi_*-m_i|\xi-\xi_*|\sigma\big),
\end{align*}
which express $(\xi',\xi'_*)$ in terms of $(\xi,\xi_*)$ (and $\sigma$). The multispecies Boltzmann collision operator is the difference of gain and loss terms,
\begin{align*}
  & Q_i^B(f) = \sum_{i=1}^N Q_{ij}^B(f_i,f_j), \quad\mbox{where} \\
  & Q_{ij}^B(f_i,f_j) = \int_{\R^3}\int_{\mathbb{S}^2}B_{ij}(\xi,\xi_*,\sigma)
  (f'_if'_{j*}-f_if_{j*})d\sigma d\xi_*,
\end{align*}
describing the collisions of particles of species $i$ with particles of species $j$. Here, we abbreviated $f'_i=f_i(\xi')$, $f_{j*}=f_j(\xi_*)$, and $f'_{j*}=f_j(\xi'_*)$. The collision kernels (or cross-sections) $B_{ij}$ satisfy the micro-reversibility conditions $B_{ij}(\xi,\xi_*,\sigma)=B_{ji}(\xi_*,\xi,\sigma)$ and $B_{ij}(\xi,\xi_*,\sigma)=B_{ij}(\xi',\xi'_*,\sigma)$ for all $i\neq j$, $\xi$, $\xi_*\in\R^3$, and $\sigma\in\mathbb{S}^2$. The collision operator fulfills the following invariance properties that reflect the conservation of mass, momentum, and energy \cite[Sec.~3.3]{BGS15}:
\begin{align}
  \int_{\R^3}Q^B_{ij}(f_i^\eps,f_j^\eps)d\xi &= 0, \nonumber \\
  \int_{\R^3}Q^B_{ij}(f_i^\eps,f_j^\eps)m_i\xi d\xi 
  + \int_{\R^3}Q^B_{ji}(f_j^\eps,f_i^\eps)m_j\xi d\xi &= 0, 
  \label{bgk.Qmme} \\
  \int_{\R^3}Q^B_{ij}(f_i^\eps,f_j^\eps)m_i|\xi|^2 d\xi 
  + \int_{\R^3}Q^B_{ji}(f_j^\eps,f_i^\eps)m_j|\xi|^2 d\xi &= 0 \nonumber
\end{align}
for all $i\neq j$. The collision operator also satisfies a multispecies form of the $H$-theorem, i.e., the entropy 
\begin{align*}
  H(f) = -\sum_{i=1}^N\int_{\R^3}\int_{\R^3}f_i (\log f_i-1)d\xi dx
\end{align*}
fulfills $dH/dt\ge 0$ with equality if and only if
\begin{align*}
  f_i = n_i\bigg(\frac{m_i}{2\pi\theta_{\rm eq}}\bigg)^{3/2}
  \exp\bigg(-\frac{m_i}{2\theta_{\rm eq}}|\xi-v_{\rm eq}|^2\bigg)
\end{align*}
holds for some equilibrium number density $n_i$, equilibrium velocity $v_{\rm eq}$, and equilibrium temperature $\theta_{\rm eq}$ that are common to all species. Here, temperatures are measured in energy units so that the Boltzmann constant $k_B$ equals one. 

We wish to simplify the collision operator. To this end, we introduce the multispecies equilibrium (Maxwellian)
\begin{align*}
  M_{ij}(\xi) = n_i\bigg(\frac{m_i}{2\pi\theta_{ij}}\bigg)^{3/2}
  \exp\bigg(-\frac{m_i}{2\theta_{ij}}|\xi-v_{ij}|^2\bigg),
\end{align*}
where $n_i$ is the number density of the $i$th species, and the multispecies velocities $v_{ij}$ and multispecies temperatures $\theta_{ij}$ are determined by momentum and energy conservation \eqref{bgk.me}; see below. We require that $v_{ij}=v_{ji}$ and $\theta_{ij}=\theta_{ji}$. These assumptions are sufficient to ensure the $H$-theorem \cite[Sec.~3.2]{HHM17} and they imply, together with energy conservation \eqref{bgk.me}, that $M_{ij}'M_{ji*}'=M_{ij}M_{ji*}$. 

In the BGK approximation, we suppose that the particles reach the equilibrium after a single collision, i.e.\ $f_i'\approx M_{ij}'$ and $f'_*\approx M'_{ji*}$. Moreover, replacing $f_{j*}$ by $M_{ji*}$ (see below) leads to 
\begin{align*}
  Q^B_{ij}(f_i,f_j)\approx \int_{\R^3}\int_{\mathbb{S}^2} 
  B_{ij}(\xi,\xi_*,\sigma)(M_{ij}'M'_{ji*} - f_iM_{ji*})d\xi_* d\sigma.
\end{align*}
We deduce from $M_{ij}'M_{ji*}'=M_{ij}M_{ji*}$ that
\begin{align}\label{bgk.Q}
  Q^B_{ij}(f_i,f_j)\approx \nu_{ij}(M_{ij}-f_i), 
  \quad\mbox{where}\quad
  \nu_{ij}(\xi) = \int_{\R^3}\int_{\mathbb{S}^2}
  B_{ij}(\xi,\xi_*,\sigma)M_{ji*}d\xi_* d\sigma.
\end{align} 
The replacement of $f_{j*}$ by $M_{ji*}$ can be motivated if $B_{ij}$ only depends on $\sigma$ \cite[Sec.~1.1]{Pup19}, since in this case
\begin{align*}
  Q^B_{ij}(f_i,f_j)\approx \int_{\R^3}\int_{\mathbb{S}^2} 
  B_{ij}(\sigma)(M_{ij}M_{ji*} - f_if_{j*})d\xi_* d\sigma
  = \bigg(\int_{\mathbb{S}^2}B_{ij}d\sigma\bigg)n_j(M_{ij}-f_i),
\end{align*}
as $M_{ji*}$ and $f_{j*}$ have the same zeroth-order moment $n_j$. The approximation \eqref{bgk.Q} is the multispecies BGK collision operator. This formulation keeps the quadratic structure of the Boltzmann collision operator. To simplify the computations, we assume that $\nu_{ij}$ is independent of the velocity $\xi$. In the following, we consider only the multispecies BGK collision operator.

\subsection{Multispecies BGK model}\label{sec.bgkmodel}

We assume that the Knudsen number, representing the average frequency of collisions, and the Mach number, representing the ratio of flow velocity to the local sound speed, are small and of order $\eps>0$; see Appendix \ref{sec.scale}.
Then the multispecies BGK model in the diffusion scaling reads as
\begin{align}\label{bgk.eq}
  \eps\pa_t f_i^\eps + \xi\cdot\na_x f_i^\eps
  = \frac{1}{\eps}\sum_{j=1}^N Q_{ij}(f_i^\eps,f_j^\eps)
  \quad\mbox{in }\R^3,\ t>0,\ i=1,\ldots,N,
\end{align}
where the multispecies BGK operator is given by
\begin{align}\label{bgk.Qij}
  Q_{ij}(f_i^\eps,f_j^\eps) = \nu_{ij}(M_{ij}^\eps-f_i^\eps), \quad
  M_{ij}^\eps(\xi) = n_i^\eps\bigg(\frac{m_i}{2\pi\theta_{ij}^\eps}
  \bigg)^{3/2}\exp\bigg(-\frac{m_i}{2\theta_{ij}^\eps}
  |\xi-\eps v_{ij}^\eps|^2\bigg).
\end{align}
The multispecies velocities are of order $\eps$, since we are interested in pure diffusive dynamics. 

The moments of $f_i^\eps$ are the number density, momentum (supposed to be of order $\eps$), and energy of the $i$th species,
\begin{align*}
  \int_{\R^3}f_i^\eps d\xi = n_i^\eps, \quad
  \int_{\R^3}f_i^\eps \xi d\xi = \eps n_i^\eps v_i^\eps, \quad
  \int_{\R^3}f_i^\eps|\xi|^2 d\xi
  = \frac{3}{m_i}n_i^\eps\theta_i^\eps + \eps^2 n_i^\eps|v_i^\eps|^2.
\end{align*}
These relations define $n_i^\eps$, $v_i^\eps$, and $\theta_i^\eps$. A computation shows that
\begin{align}\label{bgk.MMeps}
  \int_{\R^3}M_{ij}^\eps d\xi = n_i^\eps, \quad
  \int_{\R^3}M_{ij}^\eps\xi d\xi = \eps n_i^\eps v_{ij}^\eps, \quad
  \int_{\R^3}M_{ij}^\eps|\xi|^2 d\xi 
  = \frac{3}{m_i}n_i^\eps\theta_{ij}^\eps 
  + \eps^2n_i^\eps|v_{ij}^\eps|^2.
\end{align}

We still need to determine the velocities $v_{ij}^\eps$ and temperatures $\theta_{ij}^\eps$. Imposing the invariance properties \eqref{bgk.Qmme} for $Q_{ij}$ instead of $Q_{ij}^B$ and recalling that we supposed the symmetry $v_{ij}^\eps=v_{ji}^\eps$ and $\theta_{ij}^\eps=\theta_{ji}^\eps$, a computation shows that
(see Lemma \ref{lem.thetaij} in Appendix \ref{sec.theta})
\begin{equation}\label{bgk.vij}
\begin{aligned}
  v_{ij}^\eps &= \alpha_{ij}^\eps v_i^\eps 
  + \alpha_{ji}^\eps v_j^\eps, \\
  \theta_{ij}^\eps &= (\beta_{ij}^\eps\theta_i^\eps
  + \beta_{ji}^\eps\theta_j^\eps)
  + \frac{\eps^2}{3}
  \big(\beta_{ij}^\eps m_i(|v_i^\eps|^2-|v_{ij}^\eps|^2)
  + \beta_{ji}^\eps m_j(|v_j^\eps|^2-|v_{ij}^\eps|^2)\big),
\end{aligned}
\end{equation}
where
\begin{align}\label{bgk.alpha}
  \alpha_{ij}^\eps = \frac{\nu_{ij}\rho_i^\eps}{
  \nu_{ij}\rho_i^\eps+\nu_{ji}\rho_j^\eps}, \quad
  \beta_{ij}^\eps = \frac{\nu_{ij}n_i^\eps}{
  \nu_{ij}n_i^\eps+\nu_{ji}n_j^\eps},
\end{align}
and we have introduced the mass densities $\rho_i^\eps=m_in_i^\eps$. Lemma \ref{lem.thetaij} also shows that the multispecies temperature is positive if $\theta_i^\eps$, $\theta_j^\eps$, and $\nu_{ij}n_i^\eps+\nu_{ji}n_j^\eps$ are positive. This was first proved in \cite[Prop.~2]{HHM17}.

According to \cite[Prop.~1]{HHM17}, the multispecies BGK model satisfies the $H$-theorem, i.e.\ $(dH/dt)(f)\ge 0$ and $(dH/dt)(f)=0$ if and only if $f_i = M_{ii}^\eps$ with $v_{ij}^\eps = v^\eps$ and $\theta_{ij}^\eps = \theta^\eps$, which means that 
\begin{align*}
  f_i = n_i^\eps\bigg(\frac{m_i}{2\pi\theta^\eps}\bigg)^{3/2}
  \exp\bigg(-\frac{m_i}{2\theta^\eps}|\xi-\eps v^\eps|^2\bigg).
\end{align*}
In the limit $\eps\to 0$, the equilibrium $M_{ii}^\eps$ converges to
\begin{align}\label{bgk.M0}
  M_i^0(\xi) = n_i\bigg(\frac{m_i}{2\pi\theta}\bigg)^{3/2}
  \exp\bigg(-\frac{m_i}{2\theta}|\xi|^2\bigg),
\end{align}
where $n_i=\lim_{\eps\to 0}n_i^\eps$ and $\theta=\lim_{\eps\to 0}\theta^\eps$. The moments of $M_i^0$ equal
\begin{equation}\label{bgk.MM}
\begin{aligned}
  & \int_{\R^3}M_i^0d\xi = n_i, \quad 
  \int_{\R^3}M_i^0\xi d\xi = \int_{\R^3}M_i^0\xi|\xi|^2 d\xi = 0, \\
  & \int_{\R^3}M_i^0(\xi\otimes\xi) d\xi 
  = \frac{n_i\theta}{m_i}\mathbb{I}, \quad
  \int_{\R^3}M_i^0(\xi\otimes\xi)|\xi|^2 d\xi 
  = 5\frac{n_i\theta^2}{m_i^2}\mathbb{I}
\end{aligned}
\end{equation}
where $\mathbb{I}$ is the unit matrix in $\R^{3\times 3}$. 

\subsection{Chapman--Enskog expansion}

We expand the solution $f_i^\eps$ to \eqref{bgk.eq} (which is assumed to exist) around the Maxwellian \eqref{bgk.M0},
\begin{align*}
  f_i^\eps = M_i^0 + \eps g_i^\eps, \quad i=1,\ldots,N,
\end{align*}
which in fact defines the correction $g_i^\eps$. We compute the zeroth-, first-, and second-order moment equations. First, because of mass conservation,
\begin{align*}
  \pa_t\int_{\R^3} f_i^\eps d\xi + \frac{1}{\eps}\diver_x
  \int_{\R^3}M_i^0\xi d\xi + \diver_x\int_{\R^3}g_i^\eps \xi d\xi
  = \frac{1}{\eps^2}\sum_{j=1}^N\int_{\R^3} \nu_{ij}(M_{ij}^\eps - f_i^\eps)d\xi
  = 0.
\end{align*}
The second term on the left-hand side vanishes since $\xi\mapsto M_i^0(\xi)\xi$ is odd. We conclude from
\begin{align*}
  \eps n_i^\eps v_i^\eps = \int_{\R^3}f_i^\eps \xi d\xi 
  = \int_{\R^3}M_i^0\xi d\xi + \eps\int_{\R^3}g_i^\eps\xi d\xi
  = \eps\int_{\R^3}g_i^\eps\xi d\xi
\end{align*}
that $\int_{\R^3}g_i^\eps\xi d\xi=n_i^\eps v_i^\eps$. Assuming the convergences $n_i^\eps\to n_i$, $v_i^\eps\to v_i$, and $g_i^\eps\to g_i^0$ as $\eps\to 0$ in a sense that allows us to perform the limit in the integrals, we find the mass conservation equation
\begin{align*}
  \pa_t n_i + \diver_x(n_iv_i) = 0.
\end{align*}

The first-order moment equation reads as
\begin{align*}
  \eps\pa_t\int_{\R^3} f_i^\eps\xi d\xi 
  &+ \diver_x\int_{\R^3}M_i^0(\xi\otimes\xi) d\xi
  + \eps\diver_x\int_{\R^3}g_i^\eps(\xi\otimes\xi) d\xi \\
  &= \frac{1}{\eps}\sum_{j=1}^N\nu_{ij}\int_{\R^3}(M_{ij}^\eps-f_i^\eps)
  \xi d\xi.
\end{align*}
The first and third terms on the left-hand side vanish in the limit $\eps\to 0$, while the second term equals $\na_x(n_i\theta)/m_i$ (see \eqref{bgk.MM}) and the right-hand side becomes (see \eqref{bgk.MMeps})
\begin{align*}
  \frac{1}{\eps}\sum_{j=1}^N\nu_{ij}\int_{\R^3}(M_{ij}^\eps-f_i^\eps)
  \xi d\xi = \frac{1}{\eps}\sum_{j=1}^N\nu_{ij}
  (\eps n_i^\eps v_{ij}^\eps - \eps n_i^\eps v_i^\eps)
  = \sum_{j=1}^N \nu_{ij}\alpha_{ji}^\eps n_i^\eps(v_j^\eps-v_i^\eps),
\end{align*}
since $v_{ij}^\eps-v_i^\eps = (\alpha_{ij}^\eps-1)v_i^\eps + \alpha_{ji}^\eps v_j^\eps = \alpha_{ji}^\eps(v_j^\eps-v_i^\eps)$. We obtain in the limit $\eps\to 0$ the Maxwell--Stefan equations
\begin{align*}
  \na_x(n_i\theta)
  = -\sum_{j=1}^N \nu_{ij}\alpha_{ji} m_in_i(v_i-v_j), \quad
  \alpha_{ji} = \frac{\nu_{ji}m_j n_j}{\nu_{ij}m_in_i+\nu_{ji}m_jn_j},
\end{align*}
which can be written as
\begin{align*}
  \na_x(n_i\theta)
  = -\sum_{j=1}^N D_{ij}n_in_j(v_i-v_j), \quad D_{ij} = \frac{\nu_{ij}\nu_{ji}m_im_j}{\nu_{ij}m_in_i+\nu_{ji}m_jn_j}.
\end{align*}
The symmetry of $(D_{ij})$ implies that the sum of these equations over $i=1,\ldots,N$ equals
\begin{align*}
  \sum_{i=1}^N\na_x(n_i\theta) = 0, 
\end{align*}
showing that $\sum_{i=1}^N n_i\theta$ is constant in space.

To derive the energy balance law, we compute the second-order moment equations:
\begin{align*}
  \eps^2\pa_t\int_{\R^3}f_i^\eps |\xi|^2d\xi
  &+ \eps\diver_x\int_{\R^3}M_i^0 \xi|\xi|^2d\xi
  + \eps^2\diver_x\int_{\R^3}g_i^\eps\xi|\xi|^2 d\xi \\
  &= \sum_{j=1}^N\nu_{ij}\int_{\R^3}(M_{ij}^\eps-f_i^\eps)|\xi|^2 d\xi.
\end{align*}
The first term on the left-hand side equals $\int_{\R^3}f_i^\eps |\xi|^2d\xi = 3n_i^\eps\theta_i^\eps/m_i + O(\eps^2)$ and the second term vanishes:
\begin{align}\label{bgk.mom2}
  3\pa_t\bigg(\frac{n_i^\eps\theta_i^\eps}{m_i}\bigg)
  + \diver_x\int_{\R^3}g_i^\eps\xi|\xi|^2 d\xi + O(\eps^2)
  = \frac{1}{\eps^2}\sum_{j=1}^N
  \nu_{ij}\int_{\R^3}(M_{ij}^\eps-f_i^\eps)|\xi|^2 d\xi.
\end{align}
We conclude that
\begin{align*}
  \sum_{j=1}^N\nu_{ij}\int_{\R^3}(M_{ij}^\eps-f_i^\eps)|\xi|^2 d\xi
  = O(\eps^2).
\end{align*} 
By \eqref{bgk.MM}, the left-hand side becomes
\begin{align}\label{bgk.M2}
  \sum_{j=1}^N\nu_{ij}\int_{\R^3}(M_{ij}^\eps-f_i^\eps)|\xi|^2 d\xi
  = \sum_{j=1}^N\nu_{ij}\bigg(
  \frac{3n_i^\eps}{m_i}(\theta_{ij}^\eps-\theta_i^\eps)
  + \eps^2 n_i^\eps(|v_{ij}^\eps|^2 - |v_i^\eps|^2)\bigg).
\end{align}
We deduce from definition \eqref{bgk.vij} of $\theta_{ij}^\eps$ that
\begin{align*}
  \theta_{ij}^\eps - \theta_i^\eps 
  = \beta_{ji}^\eps(\theta_j^\eps-\theta_i^\eps)
  + \frac{\eps^2}{3}
  \big(\beta_{ij}^\eps m_i(|v_i^\eps|^2-|v_{ij}^\eps|^2)
  + \beta_{ji}^\eps m_j(|v_j^\eps|^2-|v_{ij}^\eps|^2)\big).
\end{align*}
Hence, \eqref{bgk.M2} shows that $\theta_{i}^\eps-\theta_j^\eps = O(\eps^2)$, and as expected, there is only one mixture temperature in the limit, $\theta=\lim_{\eps\to 0}\theta_i^\eps$. In particular, $\theta_{ij}^\eps-\theta=O(\eps^2)$.

This order is not sufficient to pass to the limit $\eps\to 0$ in \eqref{bgk.M2}, divided by $\eps^2$. However, it turns out that the sum over $i=1,\ldots,N$ vanishes. Indeed, we have
\begin{align*}
  \sum_{i=1}^N m_i\sum_{j=1}^N\frac{3n_i^\eps}{m_i}
  \nu_{ij}\beta_{ji}^\eps(\theta_j^\eps-\theta_i^\eps) = 0,
\end{align*}
since $n_i^\eps\nu_{ij}\beta_{ji}^\eps=\nu_{ij}\nu_{ji}n_i^\eps n_j^\eps/(\nu_{ij}n_i^\eps+\nu_{ji}n_j^\eps)$ is symmetric. (We have used the fact that $\sum_{i,j=1}^nA_{ij}B_{ij}=0$ if $(A_{ij})$ is symmetric and $(B_{ij})$ is skew-symmetric.) Therefore, using $\beta_{ij}^\eps+\beta_{ji}^\eps=1$, the sum of \eqref{bgk.M2} over $i=1,\ldots,N$ becomes
\begin{align*}
  &\sum_{i,j=1}^N m_i\nu_{ij}\int_{\R^3}
  (M_{ij}^\eps-f_i^\eps)|\xi|^2 d\xi
  = 3\sum_{i,j=1}^n\nu_{ij} n_i^\eps
  \beta_{ji}^\eps(\theta_j^\eps-\theta_i^\eps) \\
  &\phantom{xxx}+ \eps^2\sum_{i,j=1}^n\nu_{ij}n_i^\eps\big(
  \beta_{ij}^\eps m_i(|v_i^\eps|^2-|v_{ij}^\eps|^2)
  + \beta_{ji}^\eps m_j(|v_j^\eps|^2-|v_{ij}^\eps|^2)
  + m_i(|v_{ij}^\eps|^2-|v_i^\eps|^2)\big) \nonumber \\
  &\phantom{x}= \eps^2\sum_{i,j=1}^n\nu_{ij}n_i^\eps\big(
  \beta_{ij}^\eps m_i(|v_i^\eps|^2-|v_{ij}^\eps|^2)
  + \beta_{ji}^\eps m_j(|v_j^\eps|^2-|v_{ij}^\eps|^2) \nonumber \\
  &\phantom{xxx}+ (\beta_{ij}^\eps+\beta_{ji}^\eps)
  m_i(|v_{ij}^\eps|^2-|v_{i}^\eps|^2)\big) \nonumber \\
  &\phantom{x}= \eps^2\sum_{i,j=1}^n\nu_{ij} n_i^\eps\beta_{ji}^\eps
  \big((m_j|v_j^\eps|^2-m_i|v_{i}^\eps|^2)
  + (m_i-m_j)|v_{ij}^\eps|^2\big) \nonumber \\
  &\phantom{x}= \eps^2\sum_{i,j=1}^n\nu_{ij} n_i^\eps\beta_{ji}^\eps
  \big[(\sqrt{m_j}v_j^\eps - \sqrt{m_i}v_i^\eps)
  (\sqrt{m_j}v_j^\eps + \sqrt{m_i}v_i^\eps)
  + (m_i-m_j)|v_{ij}^\eps|^2\big]. \nonumber 
\end{align*}
Since the matrices $(\nu_{ij} n_i^\eps\beta_{ji}^\eps(\sqrt{m_j}v_j^\eps + \sqrt{m_i}v_i^\eps))_{ij}$, $(\nu_{ij} n_i^\eps\beta_{ji}^\eps|v_{ij}^\eps|^2)_{ij}$ are symmetric and the matrices $(\sqrt{m_j}v_j^\eps - \sqrt{m_i}v_i^\eps)_{ij}$, $(m_i-m_j)_{ij}$ are skew-symmetric, the right-hand side vanishes. 

Hence, the sum of \eqref{bgk.mom2} over $i=1,\ldots,N$ can be written as
\begin{align*}
  3\sum_{i=1}^N\pa_t(n_i^\eps\theta_i^\eps)
  + \diver_x\sum_{i=1}^N m_i\int_{\R^3}g_i^\eps\xi|\xi|^2 d\xi = 0,
\end{align*}
and the limit $\eps\to 0$ gives
\begin{align}\label{bgk.mom22}
  3\sum_{i=1}^N\pa_t(n_i\theta) 
  + \diver_x\sum_{i=1}^N m_i\int_{\R^3}g_i^0\xi|\xi|^2 d\xi=0 
\end{align}

It remains to determine the third-order moment of $g_i^0$. For this, we consider the third-order moment equations:
\begin{align}\label{bgk.mom3}
  \eps\pa_t&\int_{\R^3}f_i^\eps\xi|\xi|^2 d\xi
  + \diver_x\int_{\R^3}M_i^0(\xi\otimes\xi)|\xi|^2 d\xi
  + \eps\diver_x\int_{\R^3}g_i^\eps(\xi\otimes\xi)|\xi|^2 d\xi \\
  &= \frac{1}{\eps}\sum_{j=1}^N\nu_{ij}\int_{\R^3}
  (M_{ij}^\eps-M_i^0)\xi|\xi|^2 d\xi
  - \sum_{j=1}^N\nu_{ij}\int_{\R^3}g_i^\eps\xi|\xi|^2 d\xi. \nonumber
\end{align}
The first and third terms are of order $O(\eps)$, and the second term becomes
\begin{align*}
  \diver_x\int_{\R^3}M_i^0(\xi\otimes\xi)|\xi|^2 d\xi
  = \frac{5}{m_i^2}\na_x(n_i\theta^2).
\end{align*}
The first term on the right-hand side of \eqref{bgk.mom3} can be computed explicitly. In fact, the integral over $M_i^0\xi|\xi|^2$ vanishes and, using the change of variables $z=(m_i/\theta_{ij}^\eps)^{1/2}(\xi-\eps v_{ij}^\eps)$,
\begin{align*}
  \frac{1}{\eps}\sum_{j=1}^N&\nu_{ij}\int_{\R^3}
  M_{ij}^\eps\xi|\xi|^2 d\xi \\
  &= \frac{n_i^\eps}{\eps}\sum_{j=1}^N\nu_{ij}\int_{\R^3}
  \frac{e^{-|z|^2/2}}{(2\pi)^{3/2}}
  \bigg(\bigg(\frac{\theta_{ij}^\eps}{m_i}\bigg)^{1/2}z
  + \eps v_{ij}^\eps\bigg)
  \bigg|\bigg(\frac{\theta_{ij}^\eps}{m_i}\bigg)^{1/2}z
  + \eps v_{ij}^\eps\bigg|^2 dz \\
  &= n_i^\eps\sum_{j=1}^N\frac{\nu_{ij}\theta_{ij}^\eps}{m_i}\int_{\R^3}
  \frac{e^{-|z|^2/2}}{(2\pi)^{3/2}}
  (|z|^2 + 2(z\otimes z))v_{ij}^\eps dz + O(\eps) \\
  &= \frac{5}{m_i}n_i^\eps\sum_{j=1}^N\nu_{ij}\theta_{ij}^\eps 
  v_{ij}^\eps + O(\eps)
  = \frac{5}{m_i}n_i^\eps\sum_{j=1}^N\nu_{ij}\theta_{ij}^\eps 
  (\alpha_{ij}^\eps v_i^\eps + \alpha_{ji}^\eps v_j^\eps) + O(\eps^2) \\
  &\to \frac{5\nu_i}{m_i}n_i\theta\bar{v}_i
\end{align*}
as $\eps\to 0$, where 
\begin{align}\label{bgk.barv}
  \bar{v}_i = \frac{\sum_{j=1}^N\nu_{ij}
  (\alpha_{ij}v_i+\alpha_{ji}v_j)}{\sum_{j=1}^N \nu_{ij}}, \quad
  \nu_i=\sum_{j=1}^N\nu_{ij}.
\end{align}
We infer from \eqref{bgk.mom3} in the limit $\eps\to 0$ that
\begin{align*}
  \frac{5}{m_i^2}\na_x(n_i\theta^2)
  = \frac{5\nu_i}{m_i}n_i\theta\bar{v}_i
  - \nu_i\int_{\R^3}g_i^0\xi|\xi|^2 d\xi,
\end{align*}
 and consequently,
\begin{align*}
  m_i\int_{\R^3}g_i^0\xi|\xi|^2 d\xi
  = 5n_i\theta\bar{v}_i
  - \frac{5}{m_i\nu_i}\na_x(n_i\theta^2).
\end{align*}
We conclude from \eqref{bgk.mom22} that
\begin{align*}
  3&\sum_{i=1}^N\pa_t(n_i\theta) - 5\sum_{i=1}^N\diver_x
  \bigg(\frac{\na_x(n_i\theta^2)}{m_i\nu_i}\bigg)
  + 5\sum_{i=1}^N\diver_x (n_i\theta \bar{v}_i ) = 0.
\end{align*}
Summarizing the previous computations leads to the following result.

\begin{theorem}[Non-isothermal Maxwell--Stefan model]
\label{thm.bgk}
The Chapman--Enskog expansion $f_i^\eps=M_i^0+\eps g_i^\eps$ in the multispecies BGK model \eqref{bgk.eq} and the formal limit $\eps\to 0$ yield the non-isothermal Maxwell--Stefan equations
\begin{align*}
  & \pa_t n_i + \diver_x(n_iv_i) = 0, \quad
  \na_x(n_i\theta) = -\sum_{j=1}^N D_{ij}n_i n_j(v_i-v_j), \\
  & \frac32\sum_{i=1}^N \pa_t(n_i\theta) 
  + \frac{5}{2}\sum_{i=1}^N\diver_x
  \bigg\{n_i\theta\bar{v}_i 
  - \frac{\na_x \left( n_i\theta^2 \right)}{m_i\nu_i} \bigg\} = 0,
\end{align*}
where $\bar{v}_i$ and $\nu_i$ are defined in \eqref{bgk.barv}, the diffusion coefficients are given by 
\begin{align*}
  D_{ij} = \frac{\nu_{ij}\nu_{ji}m_im_j}{
  \nu_{ij}m_in_i+\nu_{ji}m_jn_j},
\end{align*}
and formally, for $i=1,\ldots,N$,
\begin{align*}
  n_i = \lim_{\eps\to 0}n_i^\eps 
  = \lim_{\eps\to 0}\int_{\R^3}f_i^\eps d\xi, \quad
  n_iv_i = \lim_{\eps\to 0}\frac{1}{\eps}
  \int_{\R^3}f_i^\eps\xi d\xi, \quad
  \theta = \lim_{\eps\to 0}\frac{m_i}{3n_i^\eps}\int_{\R^3}
  f_i^\eps|\xi|^2 d\xi.
\end{align*}
\end{theorem}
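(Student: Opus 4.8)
The plan is to substitute the Chapman--Enskog ansatz $f_i^\eps=M_i^0+\eps g_i^\eps$ into the BGK equation \eqref{bgk.eq}, build the moment hierarchy (moments of order zero through three in $\xi$), and pass to the formal limit $\eps\to 0$ at each level after identifying the relevant orders in $\eps$. First I would integrate \eqref{bgk.eq} over $\xi$: the collision contribution vanishes by the mass identity in \eqref{bgk.MMeps}, the singular flux term $\eps^{-1}\diver_x\int_{\R^3}M_i^0\xi\,d\xi$ vanishes because $\xi\mapsto M_i^0(\xi)\xi$ is odd, and comparing $\int_{\R^3}f_i^\eps\xi\,d\xi=\eps n_i^\eps v_i^\eps$ with $\int_{\R^3}M_i^0\xi\,d\xi=0$ forces $\int_{\R^3}g_i^\eps\xi\,d\xi=n_i^\eps v_i^\eps$; sending $\eps\to 0$ gives the continuity equation $\pa_t n_i+\diver_x(n_iv_i)=0$.

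Next I would take the first moment. The time-derivative term and the $g_i^\eps$ convective term are $O(\eps)$, the pressure term reduces to $\na_x(n_i\theta)/m_i$ by \eqref{bgk.MM}, and the collision term, evaluated with the momentum identity in \eqref{bgk.MMeps} together with $v_{ij}^\eps-v_i^\eps=\alpha_{ji}^\eps(v_j^\eps-v_i^\eps)$, becomes $\sum_j\nu_{ij}\alpha_{ji}^\eps n_i^\eps(v_j^\eps-v_i^\eps)$; the limit is the Maxwell--Stefan relation $\na_x(n_i\theta)=-\sum_j D_{ij}n_in_j(v_i-v_j)$ with $D_{ij}=\nu_{ij}\nu_{ji}m_im_j/(\nu_{ij}m_in_i+\nu_{ji}m_jn_j)$, which is symmetric. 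From the second moment, expanding the collision term via \eqref{bgk.M2} and the definition \eqref{bgk.vij} of $\theta_{ij}^\eps$ shows $\theta_i^\eps-\theta_j^\eps=O(\eps^2)$, so a single mixture temperature $\theta$ survives in the limit.

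The delicate point---and the step I expect to be the main obstacle---is that the second-moment balance cannot be divided by $\eps^2$ and passed to the limit term by term, because its right-hand side is only known to be $O(\eps^2)$ and not available explicitly. The remedy is to sum the equation over $i=1,\ldots,N$: the $O(1)$ contribution $3\sum_{i,j}\nu_{ij}n_i^\eps\beta_{ji}^\eps(\theta_j^\eps-\theta_i^\eps)$ vanishes because $(\nu_{ij}n_i^\eps\beta_{ji}^\eps)_{ij}$ is symmetric while $(\theta_j^\eps-\theta_i^\eps)_{ij}$ is skew-symmetric, and, after the algebraic rearrangement leading to $(\sqrt{m_j}v_j^\eps-\sqrt{m_i}v_i^\eps)(\sqrt{m_j}v_j^\eps+\sqrt{m_i}v_i^\eps)+(m_i-m_j)|v_{ij}^\eps|^2$, the remaining $O(\eps^2)$ contribution is again a pairing of symmetric matrices (namely $(\nu_{ij}n_i^\eps\beta_{ji}^\eps(\sqrt{m_j}v_j^\eps+\sqrt{m_i}v_i^\eps))_{ij}$ and $(\nu_{ij}n_i^\eps\beta_{ji}^\eps|v_{ij}^\eps|^2)_{ij}$) with skew-symmetric ones ($(\sqrt{m_j}v_j^\eps-\sqrt{m_i}v_i^\eps)_{ij}$ and $(m_i-m_j)_{ij}$), and hence vanishes. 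What is left is $3\sum_i\pa_t(n_i^\eps\theta_i^\eps)+\diver_x\sum_i m_i\int_{\R^3}g_i^\eps\xi|\xi|^2\,d\xi=0$, whose limit is \eqref{bgk.mom22}.

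Finally I would close the system using the third-moment equation. The $f_i^\eps$- and $g_i^\eps$-derivative terms are $O(\eps)$, the term $\diver_x\int_{\R^3}M_i^0(\xi\otimes\xi)|\xi|^2\,d\xi$ equals $5\na_x(n_i\theta^2)/m_i^2$ by \eqref{bgk.MM}, and the singular term $\eps^{-1}\sum_j\nu_{ij}\int_{\R^3}M_{ij}^\eps\xi|\xi|^2\,d\xi$ is computed by the substitution $z=(m_i/\theta_{ij}^\eps)^{1/2}(\xi-\eps v_{ij}^\eps)$ and the Gaussian moment formulas, giving $(5\nu_i/m_i)n_i\theta\bar{v}_i$ in the limit, with $\bar{v}_i$ and $\nu_i$ as in \eqref{bgk.barv}. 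Solving for $m_i\int_{\R^3}g_i^0\xi|\xi|^2\,d\xi=5n_i\theta\bar{v}_i-5\na_x(n_i\theta^2)/(m_i\nu_i)$ and inserting this into the summed second-moment equation \eqref{bgk.mom22} produces the energy balance stated in the theorem. Collecting the continuity equations, the Maxwell--Stefan flux relations, and this energy equation finishes the proof; throughout, the only non-algebraic ingredient is the assumption that the convergences $n_i^\eps\to n_i$, $v_i^\eps\to v_i$, $\theta_i^\eps\to\theta$, and $g_i^\eps\to g_i^0$ hold strongly enough to commute with the $\xi$-integrations.
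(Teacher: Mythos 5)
Your proposal is correct and follows essentially the same route as the paper: the same moment hierarchy up to third order, the same identification of the collision contributions via \eqref{bgk.MMeps} and \eqref{bgk.vij}, the same species-summation with symmetric/skew-symmetric cancellations to close the second-moment balance, and the same Gaussian computation of the third moment of $g_i^0$ to obtain the energy equation. No discrepancies worth noting.
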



\subsection{Entropy structure}

The kinetic entropy 
\begin{align}\label{bgk.Heps}
  H^\eps = -\sum_{i=1}^N\int_{\R^3}\int_{\R^3}
  f_i^\eps(\log f_i^\eps-1)d\xi dx
\end{align}
is nondecreasing in time; this is known as the $H$-theorem, and it follows from
\begin{align*}
  \frac{dH^\eps}{dt} 
  &= -\sum_{i=1}^N\int_{\R^3}\int_{\R^3}
  \pa_t f_i^\eps\log f_i^\eps d\xi dx
  = -\frac{1}{\eps^2}\int_{\R^3}\int_{\R^3}\sum_{i,j=1}^N\nu_{ij}
  (M_{ij}^\eps-f_i^\eps)\log f_i^\eps d\xi dx \\
  &= \frac{1}{\eps^2}\int_{\R^3}\int_{\R^3}\sum_{i,j=1}^N\nu_{ij}
  (M_{ij}^\eps-f_i^\eps)(\log M_{ij}^\eps - \log f_i^\eps) d\xi dx
  \ge 0,
\end{align*}
where the last equality follows from the fact that $M_{ij}^\eps$ and $f_i^\eps$ have the same moments and the inequality is a consequence of the monotonicity of the logarithm. We claim that such a property holds under some conditions for the macroscopic entropy
\begin{align*}
  H^\eps\to H^0 = -\sum_{i=1}^N\int_{\R^3}\bigg\{n_i(\log n_i-1)
  + \frac32 n_i \bigg(\log\frac{m_i}{2\pi\theta}-1\bigg)\bigg\}dx.
\end{align*}

\begin{proposition}[Entropy inequality]
Let $m_i\nu_i=1$ for $i=1,\ldots,N$. Then
\begin{align*}
  \frac{dH^0}{dt} = \int_{\R^3}\bigg(\frac{1}{2\theta}
  \sum_{i,j=1}^N D_{ij}n_in_j|v_i-v_j|^2
  + \frac52\sum_{i=1}^n \frac{n_i}{\theta}|\na_x\theta|^2\bigg)dx \ge 0.
\end{align*}
\end{proposition}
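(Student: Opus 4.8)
The plan is to differentiate $H^0$ in time directly, substitute the limiting equations from Theorem \ref{thm.bgk}, and integrate by parts, aiming to recognize the result as a manifestly nonnegative quadratic form. First I would compute
\begin{align*}
  \frac{dH^0}{dt} = -\sum_{i=1}^N\int_{\R^3}\bigg\{(\log n_i)\,\pa_t n_i
  + \frac32\bigg(\log\frac{m_i}{2\pi\theta}-1\bigg)\pa_t n_i
  - \frac{3n_i}{2\theta}\,\pa_t\theta\bigg\}dx,
\end{align*}
where I have used $\pa_\theta\big(\tfrac32 n_i(\log(m_i/2\pi\theta)-1)\big) = -\tfrac{3n_i}{2\theta}$. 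Collecting terms, the coefficient of $\pa_t n_i$ is $-\log n_i - \tfrac32\log(m_i/2\pi\theta) + \tfrac32$, and I would substitute $\pa_t n_i = -\diver_x(n_i v_i)$ and integrate by parts, moving $\na_x$ onto this coefficient. The $\theta$-dependent part of the coefficient produces, after differentiation, a term proportional to $\tfrac{3}{2\theta}\na_x\theta$, and the $n_i$-dependent part produces $\na_x n_i/n_i$; multiplied by $n_i v_i$ these become $v_i\cdot\na_x n_i + \tfrac{3n_i}{2\theta}v_i\cdot\na_x\theta$. Simultaneously, $\tfrac32\sum_i \tfrac{n_i}{\theta}\pa_t\theta$ must be handled using the energy equation \eqref{1.MS2}; since $m_i\nu_i=1$, that equation reads $\tfrac32\sum_i\pa_t(n_i\theta) = \tfrac52\sum_i\diver_x\{n_i\theta^2\na_x(1) \text{-type terms}\}$, i.e.\ $\tfrac32\sum_i\pa_t(n_i\theta) + \tfrac52\sum_i\diver_x\{n_i\theta\bar v_i - \na_x(n_i\theta^2)\} = 0$.

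The key algebraic step is the Maxwell--Stefan relation $\na_x(n_i\theta) = -\sum_j D_{ij}n_in_j(v_i-v_j)$. Testing this identity against $v_i$ and summing over $i$, the symmetry of $(D_{ij})$ gives the standard symmetrization
\begin{align*}
  \sum_{i=1}^N v_i\cdot\na_x(n_i\theta)
  = -\sum_{i,j=1}^N D_{ij}n_in_j v_i\cdot(v_i-v_j)
  = -\frac12\sum_{i,j=1}^N D_{ij}n_in_j|v_i-v_j|^2,
\end{align*}
which is the origin of the first nonnegative term (after dividing by $\theta$, which requires keeping track that the left side is really $\sum_i \tfrac{v_i}{\theta}\cdot\na_x(n_i\theta)$, so I must be careful to extract exactly this combination when reorganizing the terms from $\pa_t n_i$ and $\pa_t\theta$). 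The heuristic is that $\na_x n_i/n_i + \tfrac{3}{2\theta}\na_x\theta$ differs from $\tfrac{1}{n_i\theta}\na_x(n_i\theta) = \na_x n_i/n_i + \na_x\theta/\theta$ by $\tfrac{1}{2\theta}\na_x\theta$; the leftover $\tfrac12\sum_i \tfrac{n_i}{\theta}v_i\cdot\na_x\theta$ pieces and the convective $\bar v_i$-terms coming from the energy equation should combine with the diffusion terms $\na_x(n_i\theta^2)$ to reorganize into $\tfrac52\sum_i\tfrac{n_i}{\theta}|\na_x\theta|^2$ after further integration by parts.

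The main obstacle I anticipate is the bookkeeping around the convective average velocity $\bar v_i$: the energy equation contains $\diver_x(n_i\theta\bar v_i)$ with $\bar v_i$ a weighted mean of the $v_j$, not $v_i$ itself, so the term $\sum_i \tfrac{1}{\theta}(\text{something})\cdot n_i\theta\bar v_i$ arising after integration by parts will not obviously cancel against the $\sum_i v_i\cdot\na_x(n_i\theta)$ term. I expect that one must use the Maxwell--Stefan relation a second time, together with the identity $\sum_i\na_x(n_i\theta)=0$ (noted in the text, a consequence of symmetry of $D_{ij}$), and possibly the specific structure $\bar v_i\nu_i = \sum_j\nu_{ij}(\alpha_{ij}v_i+\alpha_{ji}v_j)$ to show the $\bar v_i$-contributions either telescope or are absorbed. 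A clean way to organize this is to isolate, after all integrations by parts, the coefficient of each $v_i$ and each $\na_x\theta$ separately and verify that the cross terms involving $\bar v_i$ vanish by the conservation structure; only the diagonal $|v_i-v_j|^2$ and $|\na_x\theta|^2$ survive. Assuming this cancellation works out, the nonnegativity is then immediate since $D_{ij}\ge 0$, $n_i\ge 0$, and $\theta>0$.
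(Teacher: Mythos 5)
Your strategy coincides with the paper's (differentiate $H^0$, insert the limit equations from Theorem \ref{thm.bgk}, integrate by parts, symmetrize with $D_{ij}$), and your hunch about which identities resolve the $\bar v_i$ obstacle is in fact correct --- but the proof is not closed: you leave the decisive cancellation as ``assuming this cancellation works out'', and that cancellation is the entire content of the proposition beyond routine computation. Here is how it closes. After the integrations by parts one is left (under $-\sum_i\int$) with three groups: $\frac1\theta v_i\cdot\na_x(n_i\theta)$, $-\frac52\frac{n_i}{\theta}\na_x\theta\cdot(v_i-\bar v_i)$, and $-\frac52\na_x(n_i\theta^2)\cdot\na_x\theta/\theta^2$. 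From \eqref{bgk.barv}, $v_i-\bar v_i=\frac1{\nu_i}\sum_j\nu_{ij}\alpha_{ji}(v_i-v_j)$, and with $m_i\nu_i=1$ one has $\frac{n_i}{\nu_i}\nu_{ij}\alpha_{ji}=m_in_i\nu_{ij}\alpha_{ji}=D_{ij}n_in_j$, which is symmetric; paired with the skew-symmetric $(v_i-v_j)$, the double sum vanishes (equivalently, it equals $\sum_i\sum_jD_{ij}n_in_j(v_i-v_j)=-\sum_i\na_x(n_i\theta)=0$, which is exactly your ``use the Maxwell--Stefan relation a second time''). Note that this is a second, essential use of $m_i\nu_i=1$: without it the coefficient matrix $\frac{n_i}{\nu_i}\nu_{ij}\alpha_{ji}$ is not symmetric and the cross term does not drop.

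Two bookkeeping points in your sketch are also wrong and would derail the computation if followed literally. First, the gradient of the coefficient of $\pa_t n_i$ is $-\na_x n_i/n_i+\frac32\na_x\theta/\theta$, so the mass-equation contribution carries opposite relative signs, $v_i\cdot\na_x n_i-\frac32\frac{n_i}{\theta}v_i\cdot\na_x\theta$ (up to an overall sign), not the same-sign combination you wrote; only after adding $\frac52\frac{n_i}{\theta}v_i\cdot\na_x\theta$, obtained by writing $\bar v_i=v_i-(v_i-\bar v_i)$ in the energy flux, does one arrive at $\frac1\theta v_i\cdot\na_x(n_i\theta)$ and hence, via the Maxwell--Stefan relation and the symmetry of $D_{ij}$, at the first nonnegative term. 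Second, the term $\frac52\sum_i\frac{n_i}{\theta}|\na_x\theta|^2$ does not arise from ``leftover $v_i\cdot\na_x\theta$ pieces combining with the diffusion terms'': a term linear in $v_i$ cannot produce $|\na_x\theta|^2$. It comes solely from the diffusion flux, via $\na_x(n_i\theta^2)=\theta\na_x(n_i\theta)+n_i\theta\na_x\theta$, where the first piece disappears after summation over $i$ because $\sum_i\na_x(n_i\theta)/(m_i\nu_i)=\sum_i\na_x(n_i\theta)=0$ (again using $m_i\nu_i=1$), and the second piece yields the stated term; the velocity-weighted $\na_x\theta$ terms must cancel exactly as described above rather than being ``reorganized'' into the temperature dissipation.
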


\begin{proof}
We compute
\begin{align*}
  \frac{dH^0}{dt} &= -\sum_{i=1}^N\int_{\R^3} \bigg\{\pa_t n_i\bigg(\log n_i
  - \frac32\log\theta\bigg) - \frac32\frac{1}{\theta}\pa_t(n_i\theta)
  + \frac32\pa_t n_i\bigg\}dx \\
  &= -\sum_{i=1}^N\int_{\R^3}\bigg\{n_iv_i\cdot
  \bigg(\frac{\na_x n_i}{n_i}-\frac32\frac{\na_x\theta}{\theta}\bigg)
  + \frac52\bigg(n_i\theta\bar{v}_i 
  - \na_x\bigg(\frac{n_i\theta^2}{m_i\nu_i}\bigg)\bigg)
  \cdot\frac{\na_x\theta}{\theta^2}\bigg\}dx \\
  &= -\sum_{i=1}^N\int_{\R^3}\bigg\{v_i\cdot\bigg(\na_x n_i
  + \frac{n_i}{\theta}\na_x\theta\bigg)
  - \frac52\frac{n_i}{\theta}\na_x\theta\cdot(v_i-\bar{v}_i) \\
  &\phantom{xx}
  - \frac52\frac{\theta\na_x(n_i\theta)+n_i\theta\na_x\theta}{m_i\nu_i}
  \cdot\frac{\na_x\theta}{\theta^2}\bigg\}dx \\
  &= -\sum_{i=1}^N\int_{\R^3}\bigg(
  v_i\cdot\frac{\na_x(n_i\theta)}{\theta}
  - \frac52\frac{n_i}{\theta}\na_x\theta\cdot(v_i-\bar{v}_i)
  - \frac52\frac{n_i}{\theta}\frac{|\na_x\theta|^2}{m_i\nu_i}
  \bigg)dx,
\end{align*}
where we used the fact that $\sum_{i=1}^N\na_x(n_i\theta)/(m_i\nu_i)=0$, since $m_i\nu_i=1$. It follows from definition \eqref{bgk.barv} of $\bar{v}_i$ that
\begin{align*}
  v_i-\bar{v}_i 
  = \frac{1}{\nu_i}\sum_{j=1}^N\nu_{ij}\big((1-\alpha_{ij})v_i
  - \alpha_{ji}v_j\big)
  = \frac{1}{\nu_i}\sum_{j=1}^N\nu_{ij}\alpha_{ji}(v_i-v_j).
\end{align*}
Inserting the relation $\na_x(n_i\theta)=-\sum_{j=1}^N D_{ij}n_in_j(v_i-v_j)$, we deduce from the symmetry of $D_{ij}$ that 
\begin{align}
  \frac{dH^0}{dt} &= \int_{\R^3}\bigg(
  \frac{1}{\theta}\sum_{i,j=1}^N D_{ij}n_in_j(v_i-v_j)\cdot v_i 
  + \frac52\frac{\na_x\theta}{\theta}\cdot\sum_{i,j=1}^N
  \frac{\nu_{ij}\alpha_{ji}m_in_i}{m_i\nu_i}(v_i-v_j)dx \label{bt.aux} \\
  &\phantom{xx}+ \frac52\sum_{i=1}^N
  \frac{n_i}{\theta}\frac{|\na_x\theta|^2}{m_i\nu_i}\bigg)dx \nonumber \\
  &= \int_{\R^3}\frac{1}{2\theta}\sum_{i,j=1}^N D_{ij}n_in_j
  |v_i-v_j|^2dx + \frac52\int_{\R^3}\sum_{i,j=1}^N
  \nu_{ij}\alpha_{ji}m_in_i(v_i-v_j)\cdot 
  \frac{\na_x\theta}{\theta}dx \nonumber \\
  &\phantom{xx}+ \frac52\int_{\R^3}\sum_{i=1}^N
  \frac{n_i}{\theta}|\na_x\theta|^2dx, \nonumber 
\end{align}
where we used the assumption $m_i\nu_i=1$. The matrix with entries
\begin{align*}
  \nu_{ij}\alpha_{ji}m_in_i = \frac{\nu_{ij}\nu_{ji}(m_in_i)(m_jn_j)}{
  \nu_{ij}m_in_i + \nu_{ji}m_jn_j}
\end{align*}
is symmetric, while the matrix with entries $v_i-v_j$ is skew-symmetric. Hence, the second term on the right-hand side of \eqref{bt.aux} vanishes, and we conclude the proof.
\end{proof}

The assumption $m_i\nu_i=1$ is {\em not} needed in the isothermal case, which is not surprising since this product does not appear explicitly in the isothermal model. 

\begin{remark}[Rigorous limit]\rm
The diffusion limit $\eps\to 0$ was made rigorous in \cite{BrGr23} for the linearized Boltzmann equations, leading to the isothermal Maxwell--Stefan equations, and in \cite{CMPS04} for a kinetic velocity-jump model, leading to a drift-diffusion equation. It seems to be delicate to perform the rigorous limit $\eps\to 0$ in our model because of the highly nonlinear structure of the multispecies collision operator. 
\end{remark}

\begin{remark}[Boundary conditions]\rm
The boundary conditions for the macroscopic equations depend on the boundary conditions for the kinetic model. The expansion for higher-order quantities may not satisfy the kinetic boundary conditions, which requires the Knudsen-layer correction near the boundary \cite{TaJu08}. It was shown in \cite[Prop.~2]{Pla19} that, assuming a linear combination of a regular reflection with a diffusive boundary operator, the leading part of the expansion satisfies no-flux boundary conditions. 
\end{remark}


\section{Diffusion limit in a BGK model with Brinkman force term}
\label{sec.bgkb}

In this section, we derive the generalized Busenberg--Travis equations from a multispecies kinetic model in the diffusion limit. Compared to the previous section, we choose a single collision operator similarly as in \cite{AAP02} and include a Brinkman force term. 

\subsection{BGK--Brinkman model}

The equations in the diffusion scaling read as
\begin{align}\label{br.eq}
  \frac{\eps}{\sigma}\pa_t f_i^\eps + \xi\cdot\na_x f_i^\eps
  + \frac{1}{\sigma}\na_x\phi_i^\eps\cdot\na_\xi f_i^\eps
  = \frac{\nu_i}{\eps}(M_i(f_i^\eps)-f_i^\eps) \quad\mbox{in }\R^3,
  \ t>0,
\end{align}
for $i=1,\ldots,N$, where $\eps>0$ is associated to the Knudsen and Mach number (see Appendix \ref{sec.scale}), $\sigma>0$ is a free parameter (see Section \ref{sec.hf}), $\nu_i>0$ is the collision frequency of the $i$th species, and $\phi_i^\eps$ is the Brinkman potential solving
\begin{align}\label{br.pot}
  -\eps\Delta_x\phi_i^\eps + \phi_i^\eps 
  = -\sum_{i=1}^N a_{ij}\rho_j^\eps\theta_j^\eps
  \quad\mbox{in }\R^3,
\end{align} 
where $a_{ij}\in\R$, the number density $n_i^\eps$ and temperature $\theta_i^\eps$ are given by 
\begin{align*}
  \int_{\R^3}f_i^\eps d\xi = n_i^\eps, \quad 
  \int_{\R^3}f_i^\eps|\xi|^2 d\xi = \frac{3}{m_i}n_i^\eps\theta_i^\eps,
\end{align*}
and $\rho_i^\eps=m_i n_i^\eps$ is the particle density. The Brinkman law is a generalization of the Darcy law $V_i=-\na_x p_i$, where $V_i$ is the partial velocity and $p_i=\sum_{i=1}^N a_{ij}\rho_j^\eps\theta_j^\eps$ is the partial pressure, by adding an elliptic regularization, $-\eps\Delta_x V_i+V_i=-\na_x p_i$. This equation follows from \eqref{br.pot} after differentiating and setting $V_i=\na_x\phi_i^\eps$. The Maxwellian in \eqref{br.eq} is given by
\begin{align*}
  (M_i(f_i^\eps))(\xi) = n_i^\eps\bigg(\frac{m_i}{2\pi\theta_i^\eps}\bigg)^{3/2}
  \exp\bigg(-\frac{m_i|\xi|^2}{2\theta_i^\eps}\bigg).
\end{align*}
This means that $M_i(f_i^\eps)$ has the same moments as $f_i^\eps$, 
\begin{align*}
  \int_{\R^3}M_i(f_i^\eps) d\xi = n_i^\eps, \quad 
  \int_{\R^3}M_i(f_i^\eps) \xi d\xi = 0, \quad
  \int_{\R^3}M_i(f_i^\eps)|\xi|^2 d\xi 
  = \frac{3}{m_i}n_i^\eps\theta_i^\eps,
\end{align*}
and $M_i(f_i^\eps)$ depends on $f_i^\eps$ in a nonlinear and nonlocal way. Compared to the model in \cite{AAP02}, we have set the velocity in the Maxwellian equal to zero and we have used the same temperatures for $M_i(f_i^\eps)$ and $f_i^\eps$.

\subsection{Chapman--Enskog expansion}

We expand the solution $f_i^\eps$ to \eqref{br.eq} as follows: 
\begin{align*}
  f_i^\eps = f_i^0 + \eps g_i^\eps, \quad i=1,\ldots,N,
\end{align*}
which defines the function $g_i^\eps$. The limit $\eps\to 0$ in \eqref{br.eq} leads to $M_i(f_i^0)=f_i^0$, where
\begin{align*}
  (M_i(f_i^0))(\xi) =
  n_i\bigg(\frac{m_i}{2\pi\theta_i}\bigg)^{3/2}
  \exp\bigg(-\frac{m_i|\xi|^2}{2\theta_i}\bigg),
\end{align*}
assuming that $n_i^\eps\to n_i$, $\theta_i^\eps\to \theta_i$, and $\phi_i^\eps\to\phi_i$, where $\phi_i=-\sum_{j=1}^N a_{ij}\rho_j\theta_j$. We insert the expansion $f_i^\eps= M_i(f_i^0) + \eps g_i^\eps$ into equation \eqref{br.eq},
\begin{align*}
  \eps\pa_t f_i^\eps + \sigma\xi\cdot\na_x(M_i(f_i^0)+\eps g_i^\eps)
  + \na_x\phi_i^\eps\cdot\na_\xi(M_i(f_i^0)+\eps g_i^\eps)
  = -\sigma\nu_i g_i^\eps,
\end{align*}
and perform the formal limit $\eps\to 0$,
\begin{align}\label{br.gi0}
  \sigma\xi\cdot\na_x M_i(f_i^0) + \na_x\phi_i\cdot\na_\xi M_i(f_i^0)
  = -\sigma\nu_i g_i^0,
\end{align}
where $g_i^0=\lim_{\eps\to 0}g_i^\eps$. 

The zeroth-order moment equation of \eqref{br.eq} becomes
\begin{align*}
  \pa_t\int_{\R^3}f_i^\eps d\xi
  &+ \frac{\sigma}{\eps}\diver_x\int_{\R^3}M_i(f_i^\eps)\xi d\xi
  + \sigma\diver_x\int_{\R^3}g_i^\eps\xi d\xi
  + \frac{1}{\eps}\na_x\phi_i^\eps\cdot\int_{\R^3}
  \na_\xi f_i^\eps d\xi \\
  &= -\frac{\sigma\nu_i}{\eps^2}\int_{\R^3}
  (M_i(f_i^\eps)-f_i^\eps)d\xi = 0.
\end{align*}
The second and fourth terms on the left-hand side vanish, and the limit $\eps\to 0$ yields
\begin{align*}
  \pa_t n_i + \sigma\diver_x\int_{\R^3}g_i^0\xi d\xi = 0.
\end{align*}
By inserting \eqref{br.gi0}, we compute the first-order moment of $g_i^0$:
\begin{align}\label{br.g1}
  \sigma\int_{\R^3}g_i^0\xi d\xi &= -\frac{1}{\nu_i}
  \bigg(\sigma\diver_x\int_{\R^3}
  M_i(f_i^0)(\xi\otimes\xi)d\xi + \na_x\phi_i\cdot\int_{\R^3}
  \na_\xi M_i(f_i^0)\xi d\xi\bigg) \\
  &= -\frac{\sigma}{\nu_i}\na_x\bigg(\frac{n_i\theta_i}{m_i}\bigg)
  + \frac{1}{\nu_i}n_i\na_x\phi_i. \nonumber 
\end{align}
This yields the equation
\begin{align*}
  \pa_t(m_in_i) = \diver_x\big(\nu_i^{-1}\big(\sigma\na_x(n_i\theta_i)
  - m_in_i\na_x\phi_i\big)\big).
\end{align*}

To derive the energy balance law, we consider the second-order moment equation of \eqref{br.eq}:
\begin{align*}
  \pa_t&\int_{\R^3}f_i^\eps|\xi|^2 d\xi
  + \frac{\sigma}{\eps}\diver_x\int_{\R^3}M_i(f_i^\eps)\xi|\xi|^2 d\xi
  + \sigma\diver_x\int_{\R^3}g_i^\eps\xi|\xi|^2 d\xi \\
  &\phantom{xx}+ \frac{1}{\eps}\na_x\phi_i^\eps\cdot
  \int_{\R^3}\na_\xi M_i(f_i^\eps)|\xi|^2 d\xi
  + \na_x\phi_i^\eps\cdot\int_{\R^3}\na_\xi g_i^\eps|\xi|^2 d\xi \\
  &= \frac{\sigma\nu_i}{\eps^2}\int_{\R^3}
  (M_i(f_i^\eps)-f_i^\eps)|\xi|^2d\xi = 0,
\end{align*}
since $M_i(f_i^\eps)$ and $f_i^\eps$ have the same moments. Taking into account that the second and fourth integrals on the left-hand side vanish, the limit $\eps\to 0$ gives
\begin{align*}
  \pa_t\bigg(\frac{3n_i\theta_i}{m_i}\bigg)
  + \sigma\diver_x\int_{\R^3}g_i^0\xi|\xi|^2 d\xi
  - 2\na_x\phi_i\cdot\int_{\R^3}g_i^0\xi d\xi = 0.
\end{align*}
The first-order moment of $g_i^0$ was computed in \eqref{br.g1}. It remains to compute the third-order moment of $g_i^0$:
\begin{align*}
  \sigma&\int_{\R^3}g_i^0\xi|\xi|^2 d\xi
  = -\frac{1}{\nu_i}\bigg(\sigma\diver_x\int_{\R^3}M_i(f_i^0)
  (\xi\otimes\xi)|\xi|^2 d\xi + \na_x\phi_i\cdot\int_{\R^3}
  \na_\xi M_i(f_i^0)\xi|\xi|^2 d\xi\bigg) \\
  &= -\frac{1}{\nu_i}\bigg(\sigma\diver_x\int_{\R^3}M_i(f_i^0)
  (\xi\otimes\xi)|\xi|^2 d\xi - \int_{\R^3}
  M_i(f_i^0)(|\xi|^2\mathbb{I}+2\xi\otimes\xi)\na_x\phi_i d\xi\bigg) \\
  &= -\frac{5}{\nu_i}\bigg(\sigma\na_x\bigg(\frac{n_i\theta_i^2}{m_i^2}
  \bigg) - \frac{n_i\theta_i}{m_i}\na_x\phi_i\bigg).
\end{align*}
We conclude that 
\begin{align*}
  \pa_t\bigg(\frac{3n_i\theta_i}{m_i}\bigg)
  &= 5\diver_x\bigg\{\frac{1}{\nu_i}\bigg(\sigma\na_x\bigg(
  \frac{n_i\theta_i^2}{m_i^2}\bigg) 
  - \frac{n_i\theta_i}{m_i}\na_x\phi_i\bigg)\bigg\} \\
  &\phantom{xx}
  + \frac{2}{\nu_i}\bigg(-\na_x\bigg(\frac{n_i\theta_i}{m_i}\bigg)
  +\frac{n_i}{\sigma}\na_x\phi_i\bigg)\cdot\na_x\phi_i,
\end{align*}
and, using $\rho_i=m_in_i$, we obtain the following result.

\begin{theorem}[Non-isothermal Busenberg--Travis model]
\label{thm.BT}
The Chapman--Enskog expansion $f_i^\eps=M_i^0+\eps g_i^\eps$ in the BGK--Brinkman model \eqref{br.eq} and the formal limit $\eps\to 0$ yield the non-isothermal Busenberg--Travis-type equations
\begin{align*}
  \pa_t\rho_i &= \diver_x\bigg\{\frac{\sigma}{\nu_i}
  \na_x\bigg(\frac{\rho_i\theta_i}{m_i}\bigg) - \frac{\rho_i}{\nu_i}\na_x\phi_i\bigg\},
  \quad \phi_i = -\sum_{i=1}^N a_{ij}\rho_j\theta_j, \\
  \frac32\pa_t\bigg(\frac{\rho_i\theta_i}{m_i}\bigg) 
  &= \frac52\diver_x\bigg\{\frac{\sigma}{\nu_i}\na_x
  \bigg(\frac{\rho_i\theta_i^2}{m_i^2}\bigg)
  - \frac{\rho_i\theta_i}{\nu_i m_i}\na_x\phi_i\bigg\} \\
  &\phantom{xx}- \frac{1}{\sigma}\bigg\{\frac{\sigma}{\nu_i}
  \na_x\bigg(\frac{\rho_i\theta_i}{m_i}\bigg) 
  - \frac{\rho_i}{\nu_i}\na_x\phi_i\bigg\}\cdot\na_x\phi_i,
\end{align*}
where
\begin{align*}
  \rho_i = \lim_{\eps\to 0}m_i n_i^\eps 
  = \lim_{\eps\to 0}\int_{\R^3}f_i^\eps d\xi, \quad
 \theta_i = \lim_{\eps\to 0}\frac{m_i}{3n_i^\eps}\int_{\R^3}
  f_i^\eps|\xi|^2 d\xi \quad\mbox{for }i=1,\ldots,N.
\end{align*}
\end{theorem}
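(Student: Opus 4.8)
The proof is essentially the assembly of the moment hierarchy computed in this subsection, so my plan is to organize those computations into a clean derivation. I would start from the Chapman--Enskog ansatz $f_i^\eps = f_i^0 + \eps g_i^\eps$ inserted into \eqref{br.eq}. The formal limit $\eps\to 0$ in \eqref{br.eq} forces $f_i^0 = M_i(f_i^0)$, i.e.\ $f_i^0$ is the local Maxwellian with parameters $n_i = \lim_{\eps\to 0} n_i^\eps$ and $\theta_i = \lim_{\eps\to 0}\theta_i^\eps$, while passing to the limit in \eqref{br.pot} (where the $\eps\Delta_x$ term drops) gives $\phi_i^\eps\to\phi_i = -\sum_j a_{ij}\rho_j\theta_j$. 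Collecting the $O(1)$ terms of the expanded equation then yields the algebraic identity \eqref{br.gi0}, which expresses $g_i^0$ explicitly in terms of $f_i^0$ and $\na_x\phi_i$; this is the workhorse for all subsequent moment computations.

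Next I would take $\xi$-moments of \eqref{br.eq}. The zeroth moment, after discarding the terms $\int_{\R^3}M_i(f_i^\eps)\xi\,d\xi = 0$ and $\int_{\R^3}\na_\xi f_i^\eps\,d\xi = 0$, passes in the limit to $\pa_t n_i + \sigma\diver_x\int_{\R^3}g_i^0\xi\,d\xi = 0$. The flux $\int_{\R^3}g_i^0\xi\,d\xi$ is obtained by multiplying \eqref{br.gi0} by $\xi$ and integrating, using $\int_{\R^3}M_i(f_i^0)(\xi\otimes\xi)\,d\xi = (n_i\theta_i/m_i)\mathbb{I}$ together with the integration-by-parts identity $\int_{\R^3}\na_\xi M_i(f_i^0)\otimes\xi\,d\xi = -n_i\mathbb{I}$. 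This reproduces \eqref{br.g1}, and multiplying by $m_i$ gives the stated mass equation for $\rho_i = m_i n_i$ with flux $J_i = -\nu_i^{-1}(\sigma\na_x(\rho_i\theta_i/m_i) - \rho_i\na_x\phi_i)$.

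For the energy balance I would take the second $\xi$-moment of \eqref{br.eq}. Two of the left-hand terms vanish by oddness, and the limit $\eps\to 0$ leaves $\pa_t(3n_i\theta_i/m_i) + \sigma\diver_x\int_{\R^3}g_i^0\xi|\xi|^2\,d\xi - 2\na_x\phi_i\cdot\int_{\R^3}g_i^0\xi\,d\xi = 0$, where the last integral is the flux already computed. The third moment of $g_i^0$ is read off from \eqref{br.gi0} by multiplying by $\xi|\xi|^2$ and integrating, using $\int_{\R^3}M_i(f_i^0)(\xi\otimes\xi)|\xi|^2\,d\xi = 5(n_i\theta_i^2/m_i^2)\mathbb{I}$ and the by-parts identity $\int_{\R^3}\na_\xi M_i(f_i^0)\otimes\xi|\xi|^2\,d\xi = -\int_{\R^3}M_i(f_i^0)(|\xi|^2\mathbb{I}+2\xi\otimes\xi)\,d\xi = -5(n_i\theta_i/m_i)\mathbb{I}$. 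Substituting and rewriting in terms of $\rho_i$ produces the stated energy equation; the Joule-heating term $-\sigma^{-1}J_i\cdot\na_x\phi_i$ (with the sign as written) arises precisely from the $-2\na_x\phi_i\cdot\int_{\R^3}g_i^0\xi\,d\xi$ contribution once $J_i$ is substituted.

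The computations themselves are elementary Gaussian integrals; the only point that needs care is the bookkeeping of the $\eps$- and $\sigma$-powers in \eqref{br.eq}. One must check that the time-derivative term (carrying the prefactor $\eps/\sigma$) and every $g_i^\eps$-correction term are genuinely of higher order and disappear in the limit, and that the integration-by-parts signs in the force terms $\na_x\phi_i\cdot\na_\xi(\cdot)$ are handled consistently, so that the sign of the drift $-\rho_i\na_x\phi_i/\nu_i$ in $J_i$ and the sign of the Joule term in the energy equation come out matched. This is the main place where errors could creep in; the rest is routine assembly of the identities established above.
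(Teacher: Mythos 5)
Your proposal follows the paper's own derivation essentially step by step: the same Chapman--Enskog ansatz with $f_i^0=M_i(f_i^0)$ and $\phi_i=-\sum_j a_{ij}\rho_j\theta_j$, the relation \eqref{br.gi0} for $g_i^0$, and the zeroth- and second-order moment equations of \eqref{br.eq} closed by computing the first and third moments of $g_i^0$ from \eqref{br.gi0} with the same Gaussian integral identities. The only cosmetic point is the sign convention for the Joule term: with your $J_i=-\nu_i^{-1}\big(\sigma\na_x(\rho_i\theta_i/m_i)-\rho_i\na_x\phi_i\big)$ the heating term is $+\sigma^{-1}J_i\cdot\na_x\phi_i$, which is exactly the term written in the theorem because the bracket appearing there equals $-J_i$.
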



\subsection{High-field scaling}\label{sec.hf}

We claim that the diffusion term in the mass balance equation \eqref{1.BT} vanishes if we make a high-field scaling. We choose $\sigma=\sqrt{\eps}$ in \eqref{br.eq}: 
\begin{align}\label{br.sigma}
  \sqrt{\eps}\pa_t f_i^\eps + \xi\cdot\na_x f_i^\eps
  + \frac{1}{\sqrt{\eps}}\na_x\phi_i\cdot\na_\xi f_i^\eps
  = \frac{\nu_i}{\eps}(M_i(f_i^\eps)-f_i^\eps) \quad\mbox{in }\R^3,
  \ t>0,
\end{align}
and expand $f_i^\eps=f_i^0+\sqrt{\eps}g_i^\eps$. As in the previous section, we find in the limit $\eps\to 0$ that $f_i^0=M_i(f_i^0)$. Inserting the expansion into \eqref{br.sigma} gives 
\begin{align*}
  \sqrt{\eps}\pa_t f_i^\eps 
  + \xi\cdot\na_x(M_i(f_i^0)+\sqrt{\eps}g_i^\eps)
  + \frac{1}{\sqrt{\eps}}\na\phi_i\cdot\na_\xi
  (M_i(f_i^0)+\sqrt{\eps}g_i^\eps)
  = -\frac{\nu_i}{\sqrt{\eps}}g_i^\eps,
\end{align*}
and the limit $\eps\to 0$ yields
\begin{align}\label{br.g0}
  g_i^0 = -\frac{1}{\nu_i}\na_x\phi_i\cdot\na_\xi M_i(f_i^0).
\end{align}
The zeroth-order moment equation becomes
\begin{align*}
  \pa_t\int_{\R^3}f_i^\eps d\xi 
  &+ \frac{1}{\sqrt{\eps}}\diver_x\int_{\R^3}M_i(f_i^0)\xi d\xi
  + \diver_x\int_{\R^3}g_i^\eps \xi d\xi \\
  &+ \frac{1}{\eps}\na_x\phi_i\cdot\int_{\R^3}\na_\xi f_i^\eps d\xi
  = -\frac{\nu_i}{\eps^{3/2}}\int_{\R^3}g_i^\eps d\xi = 0.
\end{align*}
Taking into account that the second and fourth terms vanish, the limit $\eps\to 0$ leads to the mass conservation law
\begin{align*}
  \pa_t n_i + \diver_x\int_{\R^3}g_i^0 \xi d\xi = 0.
\end{align*}
The flux is computed by inserting expression \eqref{br.g0} for $g_i^0$:
\begin{align*}
  \int_{\R^3}g_i^0 \xi d\xi = -\frac{1}{\nu_i}\na_x\phi_i\cdot
  \int_{\R^3}\na_\xi M_i(f_i^0)\xi d\xi
  = \frac{1}{\nu_i}\na_x\phi_i\cdot\int_{\R^3}M_i(f_i^0)d\xi
  = \frac{1}{\nu_i}n_i\na_x\phi_i.
\end{align*}
We deduce from $\phi_i=-\sum_{i=1}^N a_{ij}\rho_j\theta_j$ the Busenberg--Travis equations
\begin{align*}
  \pa_t\rho_i = -\diver_x\bigg(\frac{\rho_i}{\nu_i}\na_x\phi_i\bigg)
  = \diver_x\bigg(\frac{\rho_i}{\nu_i}
  \sum_{i=1}^N a_{ij}\na_x(\rho_j\theta_j)\bigg), \quad i=1,\ldots,N.
\end{align*}
For constant temperature $\theta_j=1$, we recover the generalized Busenberg--Travis model \cite{BuTr83}. (More precisely, the model in \cite{BuTr83} has been suggested for the special case $a_{ij}=1$.) Note that the high-field scaling does not allow us to derive the energy equation, since the second-order moment equation becomes
\begin{align*}
  \pa_t\int_{\R^3}f_i^\eps|\xi|^2d\xi 
  &+ \diver_x\int_{\R^3}g_i^\eps\xi|\xi|^2 d\xi
  - \frac{2}{\sqrt{\eps}}\na_x\phi_i\cdot\int_{\R^3}
  g_i^\eps\xi d\xi \\
  &= \frac{\nu_i}{\eps^{3/2}}\int_{\R^3}(M_i(f_i^0)-f_i^\eps)
  |\xi|^2 d\xi = 0,
\end{align*}
and the third term on the left-hand side does not converge as $\eps\to 0$.


\subsection{Entropy structure}

As in the first model, the kinetic entropy in definition \eqref{bgk.Heps} is nondecreasing in time:
\begin{align*}
  & \frac{dH^\eps}{dt} 
  = \frac{1}{\eps^2}\sum_{i=1}^N \int_{\R^3}\int_{\R^3}\nu_{i}
  (M_{i}^\eps-f_i^\eps)(\log M_{i}^\eps - \log f_i^\eps) d\xi dx
  \ge 0.
\end{align*}
We show that the macroscopic entropy
\begin{align*}
  H^0 = \lim_{\eps\to 0}H^\eps 
  = -\sum_{i=1}^N\int_{\R^3}\bigg(n_i(\log n_i-1)
  - \frac32n_i\log\frac{2\pi\theta_i}{m_i} - 1\bigg)dx
\end{align*}
is also nondecreasing in time under some positive semidefiniteness condition.

\begin{proposition}[Macroscopic entropy equality]
Let the symmetric part of $(a_{ij}/(m_i\nu_i\theta_i))_{ij}$ be positive semidefinite. Then the macroscopic entropy is nondecreasing in time:
\begin{align*}
  \frac{dH^0}{dt} &= \sum_{i=1}^N\int_{\R^3}\frac{1}{\nu_i}
  \bigg(\frac{\sigma}{m_i^2}
  \frac{|\na_x(\rho_i\theta_i)|^2}{\rho_i\theta_i}
  + \frac52\frac{\sigma}{m_i^2}\frac{\rho_i}{\theta_i}|\na_x\theta_i|^2
  + \frac{1}{\sigma}\frac{\rho_i}{\theta_i}|\na_x\phi_i|^2\bigg)dx \\
  &\phantom{xx}+ 2\sum_{i,j=1}^n\int_{\R^3}
  \frac{a_{ij}}{m_i\nu_i\theta_i}
  \na_x(\rho_i\theta_i)\cdot\na_x(\rho_j\theta_j)dx \ge 0.
\end{align*}
\end{proposition}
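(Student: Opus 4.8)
The plan is the direct macroscopic route: differentiate $H^0$ in time, substitute the evolution equations of Theorem~\ref{thm.BT}, integrate by parts, and collect the resulting quadratic forms. Writing $n_i=\rho_i/m_i$, only the ``entropic'' part $n_i\log n_i-\tfrac32 n_i\log\theta_i$ of the integrand of $-H^0$ contributes to $dH^0/dt$: every term that is a constant multiple of $n_i$ (or a constant) produces $\int_{\R^3}\pa_t n_i\,dx=0$ by mass conservation together with decay at infinity. Using $n_i\pa_t\theta_i=\pa_t(n_i\theta_i)-\theta_i\pa_t n_i$ to eliminate $\pa_t\theta_i$, this gives
\[
  \frac{dH^0}{dt}=\sum_{i=1}^N\int_{\R^3}\Big(-\mu_i\,\pa_t n_i+\frac{3}{2\theta_i}\,\pa_t(n_i\theta_i)\Big)\,dx,
  \qquad \mu_i:=\log\big(n_i\theta_i^{-3/2}\big),
\]
together with the identity $\na_x\mu_i=\na_x n_i/n_i-\tfrac32\na_x\theta_i/\theta_i=\na_x(n_i\theta_i)/(n_i\theta_i)-\tfrac52\na_x\theta_i/\theta_i$, which will be the key simplification later.

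Next I rewrite Theorem~\ref{thm.BT} in the variables $n_i,\theta_i$ via $\rho_i=m_in_i$: this yields $\pa_t n_i=\diver_x\mathcal F_i$ and $\tfrac32\pa_t(n_i\theta_i)=\tfrac52\diver_x G_i-\tfrac{m_i}{\sigma}\,\mathcal F_i\cdot\na_x\phi_i$, where
\[
  \mathcal F_i=\frac1{\nu_i}\Big(\frac{\sigma}{m_i}\na_x(n_i\theta_i)-n_i\na_x\phi_i\Big),\qquad
  G_i=\frac1{\nu_i}\Big(\frac{\sigma}{m_i}\na_x(n_i\theta_i^2)-n_i\theta_i\na_x\phi_i\Big).
\]
Substituting into the previous display and integrating by parts (discarding boundary terms at infinity, the computation being formal) gives
\[
  \frac{dH^0}{dt}=\sum_{i=1}^N\int_{\R^3}\Big(\na_x\mu_i\cdot\mathcal F_i
  +\frac{5}{2\theta_i^2}\na_x\theta_i\cdot G_i-\frac{m_i}{\sigma\theta_i}\,\mathcal F_i\cdot\na_x\phi_i\Big)\,dx .
\]

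The core step is the algebraic expansion of these three scalar products. Setting $P_i:=n_i\theta_i$ and using $\na_x(n_i\theta_i^2)=\theta_i\na_x P_i+P_i\na_x\theta_i$, $n_i/P_i=1/\theta_i$, and $P_i/\theta_i^2=n_i/\theta_i$, one finds that the terms proportional to $\na_x\theta_i\cdot\na_x P_i$ cancel (coefficients $\mp\tfrac{5\sigma}{2m_i\theta_i}$ from the $\na_x\mu_i\cdot\mathcal F_i$ and $\tfrac{5}{2\theta_i^2}\na_x\theta_i\cdot G_i$ terms), and likewise the terms proportional to $\na_x\theta_i\cdot\na_x\phi_i$ cancel (coefficients $\pm\tfrac{5n_i}{2\theta_i}$); these cancellations rely precisely on the factors $\tfrac52$ and $5$ produced by the energy and third moments of the Maxwellian. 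What survives is
\[
  \frac{dH^0}{dt}=\sum_{i=1}^N\int_{\R^3}\frac1{\nu_i}\Big(\frac{\sigma}{m_i}\frac{|\na_x P_i|^2}{P_i}
  +\frac{5\sigma}{2m_i}\frac{n_i}{\theta_i}|\na_x\theta_i|^2
  +\frac{m_in_i}{\sigma\theta_i}|\na_x\phi_i|^2
  -\frac{2}{\theta_i}\na_x P_i\cdot\na_x\phi_i\Big)\,dx .
\]
Inserting $\na_x\phi_i=-\sum_j a_{ij}\na_x(\rho_j\theta_j)=-\sum_j a_{ij}m_j\na_x P_j$ turns the last term into $2\sum_{i,j}\tfrac{a_{ij}m_j}{\nu_i\theta_i}\na_x P_i\cdot\na_x P_j$, and reverting to the variables $\rho_i\theta_i=m_iP_i$ reproduces exactly the asserted identity.

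For the sign, the first three summands are pointwise nonnegative since $\rho_i,\theta_i,\nu_i,\sigma>0$. For the cross term, put $b_{ij}=a_{ij}/(m_i\nu_i\theta_i)$ and $w_i=\na_x(\rho_i\theta_i)\in\R^3$; then $2\sum_{i,j}b_{ij}\,w_i\cdot w_j=\sum_{i,j}(b_{ij}+b_{ji})\,w_i\cdot w_j$, and applying positive semidefiniteness of the symmetric part of $(b_{ij})$ separately to each of the three spatial components of the $w_i$ shows that this is nonnegative; this is exactly the hypothesis. I expect the only real difficulty to be the bookkeeping in the expansion step: the three cross-term cancellations look delicate, but they are forced by the moment identities in Theorem~\ref{thm.BT}, so no genuinely new idea is required beyond careful accounting.
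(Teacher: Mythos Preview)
Your proof is correct and follows essentially the same route as the paper's: differentiate $H^0$, substitute the evolution equations of Theorem~\ref{thm.BT}, integrate by parts, and collect terms to arrive at the identical intermediate formula $\sum_i\int\frac1{\nu_i}\big(\frac{\sigma}{m_i}\frac{|\na_x(n_i\theta_i)|^2}{n_i\theta_i}+\frac{5\sigma}{2m_i}\frac{n_i}{\theta_i}|\na_x\theta_i|^2+\frac{m_in_i}{\sigma\theta_i}|\na_x\phi_i|^2-\frac{2}{\theta_i}\na_x\phi_i\cdot\na_x(n_i\theta_i)\big)dx$, after which the insertion of $\phi_i$ and the positive-semidefiniteness hypothesis conclude. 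Your introduction of the auxiliary quantities $\mu_i,\mathcal F_i,G_i,P_i$ streamlines the bookkeeping and makes the two cross-term cancellations more transparent than in the paper's presentation, but the underlying computation is the same.
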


\begin{proof}
We use the evolution equations in Theorem \ref{thm.BT} to compute
\begin{align*}
  \frac{dH^0}{dt} &= \sum_{i=1}^N\int_{\R^3}\bigg(-\pa_t n_i\log n_i
  + \frac32\pa_t n_i\log\theta_i + \frac32\frac{1}{\theta_i}
  \pa_t(n_i\theta_i) - \frac32\pa_t n_i\bigg)dx \\
  &= \sum_{i=1}^N\int_{\R^3}\frac{1}{\nu_i}\bigg\{
  \bigg(\frac{\sigma}{m_i}\na_x(n_i\theta_i) - n_i\na_x\phi_i\bigg)
  \cdot\bigg(\frac{\na_x n_i}{n_i} 
  - \frac32\frac{\na_x\theta_i}{\theta_i}\bigg) \\
  &\phantom{xx}+ \frac52\bigg(\frac{\sigma}{m_i}\theta_i
  \na_x(n_i\theta_i) + \frac{\sigma}{m_i}n_i\theta_i\na_x\theta_i
  - n_i\theta_i\na_x\phi_i\bigg)\cdot\frac{\na_x\theta_i}{\theta_i^2} \\
  &\phantom{xx}- \frac{1}{\theta_i}
  \bigg(\na_x(n_i\theta_i) - \frac{m_in_i}{\sigma}\na_x\phi_i\bigg)
  \cdot\na_x\phi_i\bigg\}dx.
\end{align*}
We collect the terms with factors $\na_x(n_i\theta_i)$ and $\na_x\phi_i$:
\begin{align*}
  \frac{dH^0}{dt} &= \sum_{i=1}^N\int_{\R^3}\frac{1}{\nu_i}\bigg\{
  \frac{\sigma}{m_i}\na_x(n_i\theta_i)\cdot
  \bigg(\frac{\na_x n_i}{n_i} + \frac{\na_x\theta_i}{\theta_i}\bigg)
  + \frac52\frac{\sigma}{m_i}\frac{n_i}{\theta_i}|\na_x\theta_i|^2 \\
  &\phantom{xx}- 2\na_x\phi_i\cdot\bigg(\na_x n_i + \frac{n_i}{\theta_i}
  \na_x\theta_i\bigg) + \frac{m_i}{\sigma}\frac{n_i}{\theta_i}
  |\na_x\phi_i|^2\bigg\}dx \\
  &= \sum_{i=1}^N\int_{\R^3}\frac{1}{\nu_i}\bigg(
  \frac{\sigma}{m_i}\frac{|\na_x(n_i\theta_i)|^2}{n_i\theta_i}
  + \frac52\frac{\sigma}{m_i}\frac{n_i}{\theta_i}|\na_x\theta_i|^2 \\
  &\phantom{xx}
  - \frac{2}{\theta_i}\na_x\phi_i\cdot\na_x(n_i\theta_i)
  + \frac{m_i}{\sigma}\frac{n_i}{\theta_i}|\na_x\phi_i|^2\bigg)dx.
\end{align*}
Inserting the definition of $\phi_i$, the third term on the right-hand side is written as
\begin{align*}
  -\sum_{i=1}^N\int_{\R^3}\frac{2}{\nu_i}\frac{1}{\theta_i}
  \na_x\phi_i\cdot\na_x(n_i\theta_i)
  = 2\sum_{i,j=1}^n\int_{\R^3}\frac{a_{ij}}{m_i\nu_i\theta_i}
  \na_x(\rho_j\theta_j)\cdot\na_x(\rho_i\theta_i)dx,
\end{align*}
and this expression is nonnegative if the symmetric part of $(a_{ij}/(m_i\nu_i\theta_i))$ is positive definite.
\end{proof}

\begin{remark}[Rao entropy]\rm
It is shown in \cite{CCDJ24} that the so-called Rao entropy 
\begin{align*}
  E_R = \frac12\sum_{i,j=1}^N\int_{\R^3}a_{ij}\rho_i\rho_j dx
\end{align*}
can be interpreted as the potential energy associated to the compressible Navier--Stokes equations with force $-\rho_i\na\phi_i$ in the zero-inertia limit, and this functional is nonincreasing in time along solutions to the classical (isothermal) Busenberg--Travis model. Such an interpretation is also possible on the kinetic level under the assumptions that $(a_{ij})$ is symmetric and positive definite and $m_i\nu_i=1$. Indeed, we deduce from the symmetry of $(a_{ij})$ that
\begin{align*}
  \frac{dE_R}{dt} &= \sum_{i,j=1}^N\int_{\R^3}a_{ij}\rho_j\pa_t\rho_i dx
  = -\sum_{i,j=1}^N\int_{\R^3}a_{ij}\na_x\rho_j\cdot
  \bigg(\frac{\sigma}{m_i\nu_i}\na_x\rho_i + \frac{\rho_i}{\nu_i}
  \sum_{k=1}^N a_{ik}\na_x\rho_k\bigg)dx \\
  &= -\sigma\sum_{i,j=1}^N\int_{\R^3}\frac{a_{ij}}{m_i\nu_i}\na_x\rho_i
  \cdot\na_x\rho_j dx - \sum_{i=1}^n\int_{\R^3}\frac{\rho_i}{\nu_i}
  \bigg(\sum_{j=1}^N a_{ij}\na_x\rho_j\bigg)^2 dx,
\end{align*}
and the right-hand side is nonpositive since $(a_{ij})$ is positive definite and $\nu_im_i=1$. The Rao entropy can be interpreted as the limiting potential energy 
\begin{align*}
  E_{\rm pot}^\eps &= -\sum_{i=1}^N 
  \frac{m_i}{2}\int_{\R^3}\int_{\R^3}\phi_i^\eps f_i^\eps d\xi dx
\end{align*}
associated to the kinetic model (still in the isothermal setting):
\begin{align*}
  \lim_{\eps\to 0} E_{\rm pot}^\eps
  &= -\sum_{i=1}^N\frac{m_i}{2}\int_{\R^3}\int_{\R^3}\phi_i M_i(f_i^0)
  d\xi dx = -\frac12\sum_{i=1}^N\int_{\R^3}\phi_i\rho_i dx \\
  &= \frac12\sum_{i,j=1}^N\int_{\R^3}a_{ij}\rho_j\rho_i = E_R.
\end{align*}
It is an open problem whether the Rao entropy can be interpreted as a potential energy in more general kinetic settings. 
\qed\end{remark}


\begin{appendix}
\section{Computation of $v_{ij}$ and $\theta_{ij}$}\label{sec.theta}

\begin{lemma}\label{lem.thetaij}
Let the collision operator \eqref{bgk.Qij} satisfy the 
invariance properties
\begin{align}
  \int_{\R^3}Q_{ij}(f_i^\eps,f_j^\eps)d\xi &= 0, \nonumber \\
  \int_{\R^3}Q_{ij}(f_i^\eps,f_j^\eps)m_i\xi d\xi 
  + \int_{\R^3}Q_{ji}(f_j^\eps,f_i^\eps)m_j\xi d\xi &= 0, \nonumber \\
  \int_{\R^3}Q_{ij}(f_i^\eps,f_j^\eps)m_i|\xi|^2 d\xi 
  + \int_{\R^3}Q_{ji}(f_j^\eps,f_i^\eps)m_j|\xi|^2 d\xi &= 0. \nonumber
\end{align}
Assume that $v_{ij}^\eps=v_{ji}^\eps$ and $\theta_{ij}^\eps=\theta_{ji}^\eps$. Then $v_{ij}^\eps = \alpha_{ij}^\eps v_i^\eps + \alpha_{ji}^\eps v_j^\eps$ and
\begin{align*}
  \theta_{ij}^\eps &= (\beta_{ij}^\eps \theta_i^\eps
  + \beta_{ji}^\eps\theta_j^\eps)
  + \frac{\eps^2}{3}\big(\beta_{ij}^\eps m_i
  (|v_i^\eps|^2-|v_{ij}^\eps|^2)
  + \beta_{ji}^\eps m_j(|v_j^\eps|^2-|v_{ij}^\eps|^2)\big) \\
  &= (\beta_{ij}^\eps\theta_i^\eps + \beta_{ji}^\eps\theta_j^\eps)
  + \frac{\eps^2}{3}\alpha_{ij}^\eps\alpha_{ji}^\eps
  \big(\beta_{ij}^\eps m_i+\beta_{ji}^\eps m_j\big)
  |v_i^\eps-v_j^\eps|^2, 
\end{align*}
where the values $\alpha_{ij}^\eps$ and $\beta_{ij}^\eps$ are defined in \eqref{bgk.alpha}.
\end{lemma}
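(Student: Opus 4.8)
The plan is to feed the three invariance properties, one at a time, into the known moments \eqref{bgk.MMeps} of $M_{ij}^\eps$ and the corresponding moments of $f_i^\eps$, together with the symmetry assumptions $v_{ij}^\eps=v_{ji}^\eps$ and $\theta_{ij}^\eps=\theta_{ji}^\eps$. The zeroth-order identity $\int_{\R^3}Q_{ij}(f_i^\eps,f_j^\eps)\,d\xi=0$ carries no information, since $M_{ij}^\eps$ and $f_i^\eps$ already share the zeroth moment $n_i^\eps$. For the momentum identity $\int_{\R^3}Q_{ij}m_i\xi\,d\xi+\int_{\R^3}Q_{ji}m_j\xi\,d\xi=0$ I would substitute $Q_{ij}=\nu_{ij}(M_{ij}^\eps-f_i^\eps)$ and the first moments $\int M_{ij}^\eps\xi\,d\xi=\eps n_i^\eps v_{ij}^\eps$, $\int f_i^\eps\xi\,d\xi=\eps n_i^\eps v_i^\eps$. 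After dividing by $\eps$ and using $v_{ij}^\eps=v_{ji}^\eps$, this collapses to $\nu_{ij}\rho_i^\eps(v_{ij}^\eps-v_i^\eps)+\nu_{ji}\rho_j^\eps(v_{ij}^\eps-v_j^\eps)=0$ with $\rho_i^\eps=m_in_i^\eps$, and solving gives $v_{ij}^\eps=\alpha_{ij}^\eps v_i^\eps+\alpha_{ji}^\eps v_j^\eps$ with $\alpha_{ij}^\eps$ as in \eqref{bgk.alpha}; in particular $\alpha_{ij}^\eps+\alpha_{ji}^\eps=1$.

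The same strategy applied to the energy identity, now with the second moments $\int M_{ij}^\eps|\xi|^2\,d\xi=3n_i^\eps\theta_{ij}^\eps/m_i+\eps^2 n_i^\eps|v_{ij}^\eps|^2$ and its analogue for $f_i^\eps$, and using $\theta_{ij}^\eps=\theta_{ji}^\eps$, yields after dividing by $3(\nu_{ij}n_i^\eps+\nu_{ji}n_j^\eps)$ a linear equation whose solution is the first displayed formula for $\theta_{ij}^\eps$, with $\beta_{ij}^\eps$ as in \eqref{bgk.alpha}.

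The only step requiring care is rewriting the $O(\eps^2)$ correction in the compact form of the second formula, and this is the point I expect to be the main obstacle. The key algebraic fact is that $\beta_{ij}^\eps m_i=\alpha_{ij}^\eps(\beta_{ij}^\eps m_i+\beta_{ji}^\eps m_j)$, and symmetrically for $\beta_{ji}^\eps m_j$; this follows from \eqref{bgk.alpha} and $\rho_i^\eps=m_in_i^\eps$, since $\beta_{ij}^\eps m_i=\nu_{ij}\rho_i^\eps/(\nu_{ij}n_i^\eps+\nu_{ji}n_j^\eps)$. Writing $w=v_i^\eps-v_j^\eps$ and $v_{ij}^\eps=v_i^\eps-\alpha_{ji}^\eps w=v_j^\eps+\alpha_{ij}^\eps w$, one expands $|v_i^\eps|^2-|v_{ij}^\eps|^2$ and $|v_j^\eps|^2-|v_{ij}^\eps|^2$, factors out $\beta_{ij}^\eps m_i+\beta_{ji}^\eps m_j$ using the identity above, and checks that the terms linear in $w$ cancel while the quadratic ones combine, via $\alpha_{ij}^\eps+\alpha_{ji}^\eps=1$, into $\alpha_{ij}^\eps\alpha_{ji}^\eps|w|^2$; this yields the second formula. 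Positivity of $\theta_{ij}^\eps$ is then immediate: when $\theta_i^\eps,\theta_j^\eps>0$ and $\nu_{ij}n_i^\eps+\nu_{ji}n_j^\eps>0$, the convex combination $\beta_{ij}^\eps\theta_i^\eps+\beta_{ji}^\eps\theta_j^\eps$ is positive and the correction $\tfrac{\eps^2}{3}\alpha_{ij}^\eps\alpha_{ji}^\eps(\beta_{ij}^\eps m_i+\beta_{ji}^\eps m_j)|v_i^\eps-v_j^\eps|^2$ is nonnegative.
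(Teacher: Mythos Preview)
Your proposal is correct and follows essentially the same route as the paper: both insert the moment identities \eqref{bgk.MMeps} into the momentum and energy invariance relations to obtain linear equations for $v_{ij}^\eps$ and $\theta_{ij}^\eps$, and both hinge on the same key identity $\beta_{ij}^\eps m_i=\alpha_{ij}^\eps(\beta_{ij}^\eps m_i+\beta_{ji}^\eps m_j)$ for the reformulation of the $\eps^2$-correction. The only cosmetic difference is that you parametrize via $w=v_i^\eps-v_j^\eps$ and $v_{ij}^\eps=v_i^\eps-\alpha_{ji}^\eps w$, whereas the paper expands $|v_{ij}^\eps|^2$ directly in terms of $\alpha_{ij}^\eps,\alpha_{ji}^\eps$; both collapse to $\alpha_{ij}^\eps\alpha_{ji}^\eps|v_i^\eps-v_j^\eps|^2$ in the same way.
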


\begin{proof}
We insert the definition of $Q_{ij}(f_i^\eps,f_j^\eps)$ into the momentum conservation equation and use the moments \eqref{bgk.MMeps} as well as $v_{ij}^\eps=v_{ji}^\eps$:
\begin{align*}
  0 &= \int_{\R^3}\nu_{ij}(M_{ij}-f_i^\eps)m_i\xi d\xi
  + \int_{\R^3}\nu_{ji}(M_{ji}-f_j^\eps)m_j\xi d\xi \\
  &= \eps\nu_{ij}(\rho_i^\eps v_{ij}^\eps - \rho_i^\eps v_i^\eps)
  + \eps\nu_{ji}(\rho_j^\eps v_{ij}^\eps - \rho_j^\eps v_j^\eps).
\end{align*}
Solving this equation for $v_{ij}^\eps$ yields $v_{ij}^\eps = \alpha_{ij}^\eps v_i^\eps + \alpha_{ji}^\eps v_j^\eps$. Next, we consider the energy conservation equation and take into account $\theta_{ij}^\eps=\theta_{ji}^\eps$:
\begin{align*}
  0 &= \int_{\R^3}\nu_{ij}(M_{ij}-f_i^\eps)m_i|\xi|^2 d\xi
  + \int_{\R^3}\nu_{ji}(M_{ji}-f_j^\eps)m_j|\xi|^2 d\xi \\
  &= 3\nu_{ij}n_i^\eps(\theta_{ij}^\eps-\theta_i^\eps) 
  + \eps^2\nu_{ij}m_in_i^\eps(|v_{ij}^\eps|^2-|v_i|^2) \\
  &\phantom{xx}+ 3\nu_{ji}n_j^\eps(\theta_{ij}^\eps-\theta_j^\eps) 
  + \eps^2\nu_{ji}m_jn_j^\eps(|v_{ij}^\eps|^2-|v_j|^2).
\end{align*}
We solve for $\theta_{ij}^\eps$, giving
\begin{align*}
  \theta_{ij}^\eps &= \frac{\nu_{ij} n_i^\eps\theta_i^\eps
  + \nu_{ji}n_j^\eps\theta_j^\eps}{\nu_{ij} n_i^\eps+\nu_{ji}n_j^\eps}
  + \frac{\eps^2}{3}\frac{\nu_{ij}m_in_i^\eps(|v_i^\eps|^2-|v_{ij}^\eps|)
  + \nu_{ji}m_jn_j^\eps(|v_j^\eps|^2-|v_{ij}^\eps|)
  }{\nu_{ij} n_i^\eps+\nu_{ji}n_j^\eps} \\
  &= (\beta_{ij}^\eps\theta_i^\eps + \beta_{ji}^\eps\theta_j^\eps)
  + \frac{\eps^2}{3}\big(\beta_{ij}^\eps m_i(|v_i^\eps|^2-|v_{ij}^\eps|)
  + \beta_{ji}^\eps m_j(|v_j^\eps|^2-|v_{ij}^\eps|)\big).
\end{align*}
The $\eps^2$-term can be reformulated by using $\alpha_{ij}^\eps+\alpha_{ji}^\eps=1$ and $|v_{ij}^\eps|^2=(\alpha_{ij}^\eps)^2|v_i^\eps|^2 + (\alpha_{ji}^\eps)^2|v_j^\eps|^2 + 2\alpha_{ij}^\eps\alpha_{ji}^\eps v_i^\eps\cdot v_j^\eps$:
\begin{align*}
  \beta_{ij}^\eps m_i&(|v_i^\eps|^2-|v_{ij}^\eps|)
  + \beta_{ji}^\eps m_j(|v_j^\eps|^2-|v_{ij}^\eps|) \\
  &= \frac{\nu_{ij}m_in_i^\eps+\nu_{ji}m_jn_j^\eps}{
  \nu_{ij}n_i^\eps+\nu_{ji}n_j^\eps}
  \big(\alpha_{ij}^\eps(|v_i^\eps|^2-|v_{ij}^\eps|)
  + \alpha_{ji}^\eps(|v_j^\eps|^2-|v_{ij}^\eps|)\big) \\
  &= (\beta_{ij}^\eps m_i+\beta_{ji}^\eps m_j)
  \big(\alpha_{ij}^\eps|v_i^\eps|^2
  + \alpha_{ji}^\eps|v_j^\eps|^2 - |v_{ij}^\eps|^2\big) \\
  &= (\beta_{ij}^\eps m_i+\beta_{ji}^\eps m_j)
  \big(\alpha_{ij}^\eps(1-\alpha_{ij}^\eps)
  |v_i^\eps|^2 + \alpha_{ji}^\eps(1-\alpha_{ji}^\eps)|v_j^\eps|^2
  - 2\alpha_{ij}^\eps\alpha_{ji}^\eps v_i^\eps\cdot v_j^\eps\big) \\
  &= (\beta_{ij}^\eps m_i+\beta_{ji}^\eps m_j)
  \alpha_{ij}^\eps\alpha_{ji}^\eps|v_i^\eps-v_j^\eps|^2,
\end{align*}
finishing the proof.
\end{proof}


\section{Scaling}\label{sec.scale}

We wish to interpret the scaling $\sigma=\sqrt{\eps}$ in Section \ref{sec.hf} by writing the kinetic equation
\begin{align}\label{be}
  \pa_t f_i + \xi\cdot\na_x f_i + \frac{F_i}{m^*}\cdot\na_\xi f_i
  = Q_i(f), \quad i=1,\ldots,N,
\end{align}
in dimensionless form. Here, $F_i$ denotes a force and $m^*$ the associated particle mass. We choose a characteristic time $\tau$, a characteristic length $\lambda$, and the characteristic velocity $u_0=\sqrt{k_B\theta/m^*}$ (proportional to the sound speed of an ideal gas), where $k_B$ is the Boltzmann constant and $\theta$ the ensemble temperature. The characteristic time and length define a second velocity scale, $u=\lambda/\tau$. The value $\lambda_0=u_0\tau_0$ (mean-free path) is the distance that a particle travels between two consecutive collisions in time $\tau_0$. This defines the dimensionless variables
\begin{align*}
  t = \tau t_s, \quad x = \lambda x_s, \quad \xi = u_0 \xi_s,
\end{align*}
as well as the dimensionless force $F_i=F_0 F_{i,s}$, where $F_0$ will be determined below, and the dimensionless collision operator $Q_{i}(f)=\tau_0^{-1}Q_{i,s}(f)$. Then \eqref{be} becomes, after omitting the index ``$s$'' for the dimensionless variables and functions,
\begin{align*}
  \mathrm{Ma}\,\pa_t f_i + \xi\cdot\na_x f_i 
  + \frac{F_0\lambda}{m^*u_0^2}F_i\cdot\na_\xi f_i 
  = \frac{1}{\mathrm{Kn}}Q_i(f),
\end{align*}
where the Mach number Ma and Knudsen number Kn are defined by
\begin{align*}
  \mathrm{Ma} = \frac{u}{u_0}, \quad 
  \mathrm{Kn} = \frac{\lambda_0}{\lambda}.
\end{align*}

The diffusive scaling in Section \ref{sec.bgkmodel} is obtained by assuming that the Mach and Knudsen numbers are equal, $\mathrm{Mu}=\mathrm{Kn}=\eps$. This means that the mean free path is small compared to the charatceristic length, i.e., collisions are very frequent, and the time $\tau_0$ between two consecutive collisions is very small compared to the characteristic time $\tau$, $\tau_0/\tau=\mathrm{Ma}\cdot\mathrm{Kn}=\eps^2$. Furthermore, the characteristic force $F_0$ is of the same order as the kinetic energy $m^*u_0^2/2$ of a particle with mass $m^*$ and moving with the sound speed, $F_0=m^*u_0^2$. This leads to
\begin{align*}
  \eps \pa_t f_i + \xi\cdot\na_x f_i + F\cdot\na_\xi f_i 
  = \frac{1}{\eps}Q_i(f).
\end{align*}
For the scaling $\sigma=\sqrt{\eps}$, we suppose that the Knudsen number is much smaller than the Mach number, $\mathrm{Ma}=\eps$ and $\mathrm{Kn}=\eps^2$. This means that $\tau_0/\tau=\eps^3$, i.e., collisions are even more frequent than in the diffusive scale. Furthermore, the force is dominant at the scale $1/\eps$, and we choose $F_0=m^*u_0^2/\eps$. We find that
\begin{align*}
  \eps\pa_t f_i + \xi\cdot\na_x f_i + \frac{1}{\eps}F\cdot\na_\xi f_i
  = \frac{1}{\eps^2}Q_i(f),
\end{align*}
which yields \eqref{br.sigma} after replacing $\eps$ by $\sqrt{\eps}$.

When performing the Chapman--Enskog expansion $f_i=M_i+\eps g_i^\eps$ in the BGK model $Q_i(f)=\nu_i(M_i(f_i)-f_i)$, the right-hand side becomes $-g_i/\eps$, while the convective part $\xi\cdot\na_x M_i$ that usually provides the diffusion term is of order one only. The force term contains $F\cdot\na_\xi M_i/\eps$ which balances the collision term and leads in the limit $\eps\to 0$ to the flux
\begin{align*}
  \int_{\R^3}g_i\xi d\xi = -F\cdot\int_{\R^3}\na_\xi M_i\xi d\xi 
  = F\int_{\R^3}M_i d\xi = Fn_i,
\end{align*}
which acts like a convective term. Since in our model $F=\na_x\phi_i$ depends on the gradients of $\rho_j$, we arrive to a diffusion model.

\end{appendix}


\end{document}